\newcommand{\seq}{\coloneqq}
\newenvironment{red}{\relax\color{red}}{\relax}
\newenvironment{blue}{\relax\color{blue}}{\relax}
\newcommand{\ber}{\begin{red}}
\newcommand{\er}{\end{red}}
\newcommand{\beb}{\begin{blue}}
\newcommand{\eb}{\end{blue}}
\newtheorem{Thm}{Theorem}[section]
\newtheorem{Cor}[Thm]{Corollary}
\newtheorem{Prop}[Thm]{Proposition}
\newtheorem{Lem}[Thm]{Lemma}
\newtheorem{Conj}[Thm]{Conjecture}
\theoremstyle{definition}
\newtheorem{Def}[Thm]{Definition}
\newtheorem{Rem}[Thm]{Remark}
\newtheorem{Ex}[Thm]{Example}
\numberwithin{equation}{section}
\newcommand{\arxiv}[1]{\href{http://arxiv.org/abs/#1}{\texttt{arXiv:#1}}}
\newcommand{\ol}{\overline}
\newcommand{\ep}{\epsilon}
\newcommand{\fsl}{\mathfrak{sl}}
\newcommand{\SG}{\mathfrak{S}}
\newcommand{\Z}{\mathbb{Z}}
\newcommand{\Q}{\mathbb{Q}}
\newcommand{\C}{\mathbb{C}}
\newcommand{\Cc}{\mathscr{C}}
\newcommand{\Nn}{\mathscr{N}}
\newcommand{\Aa}{\mathscr{A}}
\newcommand{\Tt}{\mathscr{T}}
\newcommand{\cM}{\mathcal{M}}
\newcommand{\cY}{\mathcal{Y}}
\newcommand{\tc}{\tilde{c}}
\newcommand{\tC}{\tilde{C}}
\newcommand{\tn}{{\tilde{n}}}
\newcommand{\te}{{\tilde{e}}}
\newcommand{\txi}{{\tilde{\xi}}}
\newcommand{\tE}{\tilde{E}}
\newcommand{\wh}{\widehat}
\newcommand{\hI}{\wh{I}}
\newcommand{\hA}{\wh{\mathcal{A}}}
\newcommand{\bc}{\mathbf{c}}
\newcommand{\id}{\mathsf{id}}
\newcommand{\wt}{\mathrm{wt}}
\newcommand{\ev}{\mathop{\mathrm{ev}}\nolimits}
\newcommand{\Lsl}{L\mathfrak{sl}}
\subjclass[2020]{17B37, 81R50}
\title{Inflations among quantum Grothendieck rings of type A}
\date{\today}
\author{Ryo Fujita}
\address{Research Institute for Mathematical Sciences, Kyoto University, Kitashirakawa-Oiwake-cho, Sakyo, Kyoto, 606-8502, Japan}
\email{rfujita@kurims.kyoto-u.ac.jp}
\dedicatory{Dedicated to Vyjayanthi Chari on her 65th birthday}
\begin{document}

\maketitle

\begin{abstract}
We introduce a collection of injective homomorphisms among the quantum Grothendieck rings of finite-dimensional modules over the quantum loop algebras of type $\mathrm{A}$.
In the classical limit, it specializes to the inflation among the usual Grothendieck rings studied by Brito--Chari [J.\ Reine Angew.\ Math.\ 804, 2023].
We show that our homomorphisms respect the canonical bases formed by the simple $(q,t)$-characters, which in particular verifies a conjecture of Brito--Chari in loc.\ cit. 
We also discuss a categorification of our homomorphisms using the quiver Hecke algebras of type $\mathrm{A}_\infty$.  
\end{abstract}

\tableofcontents

\section{Introduction}

The monoidal category $\Cc$ of finite-dimensional representations of quantum loop algebra associated with a complex simple Lie algebra  has been studied for over 30 years from various perspectives.
As in the classical representation theory of simple Lie algebras, we have a highest weight type classification of simple representations in $\Cc$ due to Chari--Pressley~\cite{CP95}.
However, beyond such a classification result, the situation is much more intricate.
For instance, there is no known closed formula for the $q$-characters of general simple representations in $\Cc$.

The quantum Grothendieck ring $K_t(\Cc)$, introduced by Nakajima~\cite{Nak} and Varagnolo--Vasserot~\cite{VV03} for type $\mathrm{ADE}$, and by Hernandez~\cite{Her} for general type, is a one-parameter deformation of the Grothendieck ring $K(\Cc)$.
It carries a canonical basis formed by the $t$-analog of $q$-characters (or $(q,t)$-characters for short) of simple representations, constructed through the Kazhdan--Lusztig type algorithm.  
For type $\mathrm{ADE}$, Nakajima~\cite{Nak} applied his theory of quiver varieties to prove an analog of Kazhdan--Lusztig conjecture in this setting, that is, the simple $(q,t)$-characters specialize to the simple $q$-characters in the classical limit $t \to 1$.
Thus, it gives an explicit algorithm to compute the simple $q$-characters of type $\mathrm{ADE}$ in a uniform way.

For the other type $\mathrm{BCFG}$,
Hernandez, Oh, Oya, and the present author  \cite{FHOO, FHOO2} verified the analog of Kazhdan--Lusztig conjecture for some classes of simple representations, including all the simple representations of type $\mathrm{B}$. 
A crucial step in their proof is the construction of a collection of isomorphisms among the quantum Grothendieck rings of different Dynkin types, respecting the canonical bases.
For example, it includes some isomorphisms between the quantum Grothendieck rings of type $\mathrm{B}_n$ and type $\mathrm{A}_{2n-1}$.

This result partly motivates us to investigate further relationship among the quantum Grothendieck rings. 
In this paper, we restrict our attention to type $\mathrm{A}$ and construct a collection of injective homomorphisms among the quantum Grothendieck rings of different ranks. 
(Of course, by composing them with the isomorphisms  between type $\mathrm{B}_{n}$ and type $\mathrm{A}_{2n-1}$ in \cite{FHOO, FHOO2}, we can extend our collection to a collection of injective homomorphisms among type $\mathrm{AB}$.)
To be more precise, let $\Cc_n$ be a monoidal skeleton of the category $\Cc$ (= the subcategory $\Cc_\Z$ in the sense of Hernandez--Leclerc~\cite{HL10}) for the quantum loop algebra of $\mathfrak{sl}_n$, and $K_t(\Cc_n)$ its quantum Grothendieck ring. 
Let $[1,n] \seq \{1,2,\ldots, n \}$ be the integer interval between $1$ and $n$, 
and $I_n \seq [1,n-1]$ an index set of simple roots of $\mathfrak{sl}_n$. 
The main result of this paper is the following.

\begin{Thm}[= Theorem \ref{Thm:main}] \label{Thm:main_intro}
Let $n, \tn$ be two positive integers with $1 < n < \tn$.
For any choice of height functions $\xi \colon I_n \to \Z$, $\txi \colon I_\tn \to \Z$ (cf.~\S\ref{Ssec:pres}) and a (strictly) increasing function $\nu \colon [1,n] \to [1,\tn]$, we have an injective homomorphism 
$\Psi_{\txi, \nu, \xi} \colon K_t(\Cc_n) \to K_t(\Cc_\tn)$ respecting the canonical bases.
\end{Thm}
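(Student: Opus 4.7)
My strategy consists of four steps: (i) define $\Psi \seq \Psi_{\txi,\nu,\xi}$ on the fundamental $(q,t)$-characters generating $K_t(\Cc_n)$; (ii) check it extends to an algebra homomorphism; (iii) verify it respects the canonical basis of simple $(q,t)$-characters; and (iv) deduce injectivity.

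For step (i), I would take the classical Brito--Chari inflation $K(\Cc_n) \to K(\Cc_\tn)$ on fundamentals as the starting point and $t$-deform its prescription. Concretely, the formula should send each fundamental $(q,t)$-character $[L(Y_{i,a})]_t$ to an explicit element of $K_t(\Cc_\tn)$ built from $\nu,\xi,\txi$: the vertex $i \in I_n$ is relabelled through $\nu$, the spectral parameter $a$ is corrected by the difference $\txi(\nu(i)) - \xi(i)$ so as to align the two height functions, and when $\nu(i+1) > \nu(i)+1$ the image should involve a Kirillov--Reshetikhin $(q,t)$-character indexed by the interval $[\nu(i),\nu(i+1)-1]$, imitating Brito--Chari. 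The necessary parity constraints on $\tilde\xi\circ\nu - \xi$ should be enforced, or absorbed into the shift formula by a half-integer power of $t$.

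For step (ii), I would appeal to the presentation of $K_t(\Cc_n)$ by the fundamentals with $t$-commutation relations (due to Hernandez) together with the quantum T-system. The task then reduces to verifying that the symmetric Cartan pairing and the arrow count encoded in the height function are both preserved under the substitution of step (i). These are combinatorial compatibilities between $\nu$, $\xi$ and $\txi$, and should follow by direct inspection; the integrality of the spectral shift together with the monotonicity of $\nu$ guarantees that adjacencies in $I_n$ are sent to genuine adjacencies or non-overlappings in $I_\tn$, so the structure constants are transported correctly.

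Step (iii) is the main obstacle. My plan is to show that $\Psi$ intertwines the bar involutions on both sides and sends dominant $(q,t)$-monomials to dominant $(q,t)$-monomials while preserving the dominance partial order. Combined with the Kazhdan--Lusztig-type uniqueness characterization of simple $(q,t)$-characters of Nakajima--Varagnolo--Vasserot, this forces $\Psi([L(m)]_t) = [L(\Psi(m))]_t$ for every dominant monomial $m$. Alternatively, as hinted by the abstract, I would realize $\Psi$ as the Grothendieck map of a suitable monoidal functor between categories of graded modules over the quiver Hecke algebras of type $\mathrm{A}_\infty$ induced by $\nu$: in that picture, the preservation of simple self-dual classes is automatic, giving a clean categorical derivation of the canonical-basis statement. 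Step (iv) is then immediate, since $\nu$ is strictly increasing, so distinct dominant monomials in $K_t(\Cc_n)$ go to distinct dominant monomials in $K_t(\Cc_\tn)$ and $\Psi$ maps a basis to a linearly independent set. The genuinely new content beyond Brito--Chari is concentrated in the $t$-twist of (ii) and the canonical-basis compatibility in (iii), where I expect the main technical work to lie.
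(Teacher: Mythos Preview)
Your endgame in step (iii)---bar-invariance plus the Kazhdan--Lusztig uniqueness characterization of simple $(q,t)$-characters---is exactly how the paper finishes, but steps (i) and (ii) miss the key construction. The paper does \emph{not} define $\Psi$ by prescribing images of fundamentals inside the quantum torus and then checking ``$t$-commutation plus quantum T-system'' relations; there is no such presentation of $K_t(\Cc_n)$ available to check against. Instead it passes through the Hernandez--Leclerc isomorphism $\Psi_\xi$ between (the localization of) $K_t(\Cc_n)$ and the \emph{bosonic extension} $\hA_n$, an explicitly presented $\Q(t^{1/2})$-algebra with generators $e_{i,k}$ ($i\in I_n$, $k\in\Z$) and relations \eqref{R1}--\eqref{R2}. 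The homomorphism is defined at that level by $e_{i,k}\mapsto \iota_k(e(\alpha_{\nu(i),\nu(i+1)}))\in\hA_{\tn}$, and its well-definedness is verified by induction on $\tn-n$ using the braid symmetries $\sigma_l^{\pm1}$ of Kashiwara--Kim--Oh--Park on $\hA_{\tn}$. This presentation-level construction is the missing engine in your plan.

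There is also a concrete error in step (i): the image of a fundamental $(q,t)$-character is not a Kirillov--Reshetikhin class. After reducing to the case where both $\xi$ and $\txi$ are \emph{increasing}---a reduction your plan omits, but which is essential since for general height functions the induced map $\psi_{\txi,\nu,\xi}$ on dominant monomials is only piecewise linear---the paper shows that $\chi_{q,t}(L(Y_{i,p}))$ is sent to another \emph{fundamental} $\chi_{q,t}(L(Y_{j,s}))$, with $(j,s)$ obtained by transporting $(i,p)$ through the bijections $\phi_\xi,\phi_{\txi}$ and the root map $\alpha_{a,b}\mapsto\alpha_{\nu(a),\nu(b)}$. (Note also that $\nu$ maps $[1,n]\to[1,\tn]$, not $I_n\to I_\tn$, so the composite $\txi\circ\nu$ in your shift formula is ill-typed.) Once fundamentals go to fundamentals and the pairing $\Nn$ is seen to be preserved via Lemma~\ref{Lem:FO}, standard elements $E_t(m)$ go to $E_t(\psi(m))$, and then your step (iii) applies verbatim.
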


We call the homomorphisms $\Psi_{\txi, \nu, \xi}$ the \textit{quantum inflations}, as the classical limit of $\Psi_{\txi, \nu, \xi}$ for a special choice of $(\xi, \txi, \nu)$ coincides with the inflation from $K(\Cc_n)$ to $K(\Cc_\tn)$ studied by Brito--Chari~\cite[\S3]{BC}.
Our construction of $\Psi_{\txi, \nu, \xi}$ relies on the presentation of a localization of $K_t(\Cc_n)$ (resp.\ $K_t(\Cc_\tn)$) due to Hernandez--Leclerc~\cite{HL} as the bosonic extension of  the half of the quantized enveloping algebra $U_t(\mathfrak{sl}_n)$ (resp.\ $U_t(\mathfrak{sl}_\tn)$), which depends on the choice of height function $\xi$ (resp.\ $\txi$).
The choice of increasing function $\nu$ gives rise to an injective homomorphism between the bosonic extensions, which is an analog of the homomorphism $\mathfrak{sl}_n \to \mathfrak{sl}_\tn$ of Lie algebras sending the $(i,j)$-matrix unit $E_{i,j} \in \mathfrak{sl}_n$, $i \neq j$, to $E_{\nu(i),\nu(j)} \in \mathfrak{sl}_\tn$.

Combined with the aforementioned result of Nakajima~\cite{Nak}, Theorem~\ref{Thm:main_intro} implies that the classical limit of $\Psi_{\txi, \nu, \xi}$ respects the simple classes.
In particular, we find that Brito--Chari's inflation respects the simple classes, which gives a proof of \cite[Conjecture 3.2]{BC}. 

In the last section of this paper, we briefly discuss a categorification of our quantum inflations.
By Kang--Kashiwara--Kim~\cite{KKK}, the quantum Grothendieck ring $K_t(\Cc_n)$ is known to be isomorphic to the Grothendieck ring $K(\Tt_n)$ of a certain localization $\Tt_n$ of the category of finite-dimensional graded modules over the quiver Hecke algebras of type $\mathrm{A}_\infty$.
In the last section,  we observe the following.

\begin{Thm}[= Theorem~\ref{Thm:categorification}]
For any choice of $(\xi, \txi, \nu)$, there is a graded exact monoidal functor $F_{\txi, \nu, \xi} \colon \Tt_n \to \Tt_\tn$ categorifying $\Psi_{\txi, \nu, \xi}$.
\end{Thm}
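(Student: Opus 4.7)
The plan is to realize $F_{\txi,\nu,\xi}$ via a residue-relabelling functor on graded modules over the quiver Hecke algebra of type $\mathrm{A}_\infty$, which then descends to the relevant localizations. By Kang--Kashiwara--Kim, both $\Tt_n$ and $\Tt_\tn$ are constructed as localizations of the category of finite-dimensional graded modules over $R_{\mathrm{A}_\infty}$, so it suffices to exhibit a graded, exact, monoidal functor between the appropriate unlocalized subcategories that sends the real simples inverted in the localization defining $\Tt_n$ to real simples that are inverted in $\Tt_\tn$.

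The starting point is to extract from $(\xi,\txi,\nu)$ an explicit relabelling $\tilde\nu$ of residues so that the images of the ``active'' residue sequences parametrizing modules in $\Tt_n$ (via $\xi$) become ``active'' residue sequences for $\Tt_\tn$ (via $\txi$). This relabelling is precisely the one already encoded in the bosonic-extension homomorphism underlying $\Psi_{\txi,\nu,\xi}$; in particular, the adjacencies in the $\mathrm{A}_\infty$ Dynkin diagram relevant to the KLR relations are respected on the active residues, even though $\nu$ itself need not be order-preserving for adjacency in $\Z$. I would then define $F_{\txi,\nu,\xi}$ as restriction of scalars along a graded algebra homomorphism matching the idempotents $e(\mathbf i)$, polynomial generators $x_k$ and intertwiners $\tau_k$ according to $\tilde\nu$. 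Exactness is automatic for pullback of modules, while monoidality follows because the convolution product in $\Tt_n$ is implemented by induction along the standard merging of KLR algebras, which commutes with residue relabelling.

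Descent to the localization is then a matter of verifying that $\tilde\nu$ sends each of the cuspidal real simples inverted in $\Tt_n$ to a cuspidal real simple inverted in $\Tt_\tn$; this is essentially by construction, since the collection of inverted simples on each side corresponds under KKK to the frozen invertible classes in the bosonic presentation used to define $\Psi_{\txi,\nu,\xi}$. To identify $[F_{\txi,\nu,\xi}]=\Psi_{\txi,\nu,\xi}$, I would invoke the Hernandez--Leclerc bosonic-extension presentation of $K_t(\Cc_n)$ together with Theorem~\ref{Thm:main}: both ring homomorphisms agree on a generating set of cuspidal classes by the definition of $\tilde\nu$, hence they agree on the whole ring.

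The main obstacle, I expect, is the monoidality together with grading bookkeeping: one must check that the relabelling truly intertwines the KLR quadratic and braid relations on the active strands, and that the grading shifts originating from the different height functions $\xi$ and $\txi$ are absorbed into the definition of $F_{\txi,\nu,\xi}$ so that the categorification identity holds without an extra renormalization. A secondary subtlety is the compatibility with the R-matrix-type intertwiners used to define the localization, which is what ultimately makes the descent a genuine monoidal functor rather than merely a functor of abelian categories.
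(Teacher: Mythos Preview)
Your proposal has a genuine gap at its core step. You want to build $F_{\txi,\nu,\xi}$ as restriction of scalars along a graded algebra homomorphism $R_{\mathrm{A}_\infty}(\beta)\to R_{\mathrm{A}_\infty}(\tilde\nu_*\beta)$ sending $e(\mathbf i)\mapsto e(\tilde\nu(\mathbf i))$, $x_k\mapsto x_k$, $\tau_k\mapsto\tau_k$. But such a homomorphism does not exist unless $\tilde\nu$ preserves adjacency in the $\mathrm{A}_\infty$ diagram, which almost never happens here: for an increasing $\nu$ with $\nu(a+1)-\nu(a)>1$, the source relation $\tau_k^2 e(\ldots,a,a+1,\ldots)=Q_{a,a+1}(x_k,x_{k+1})e(\ldots)$ (a nontrivial linear polynomial) would have to map to $\tau_k^2 e(\ldots,\nu(a),\nu(a+1),\ldots)=e(\ldots)$ (a constant), and similarly the braid relation changes form. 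Your remark that ``adjacencies \ldots\ are respected on the active residues'' is precisely what fails: active residues $a$ and $a+1$ become non-adjacent after relabelling. So the naive relabelling functor is not well-defined, and the subsequent claims about exactness, monoidality, and descent rest on nothing.

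The paper's argument avoids this by never attempting an algebra homomorphism. For the increasing height functions $\xi_0,\txi_0$ it invokes the Kashiwara--Park duality construction: one takes the affinizations of the simples $L(\nu_*(\alpha_a))$ in the \emph{target} category, checks that the degrees of the normalized $R$-matrices among them reproduce the Cartan integers $-(\alpha_a,\alpha_b)$ of the \emph{source}, and obtains from this duality datum a graded exact monoidal functor $\wh F_\nu\colon\Aa\to\Aa$ with $\wh F_\nu(L(\alpha_a))\simeq L(\nu_*(\alpha_a))$. This is an induction-type construction, not restriction, and it is exactly what absorbs the mismatch in adjacency. Compatibility with the twist \eqref{eq:star} then follows from $B_\tn(\nu_*x,\nu_*y)=B_n(x,y)$, and descent to the localizations is immediate since $\wh F_\nu(L(\alpha_{a,a+n}))\simeq L(\alpha_{\nu(a),\nu(a)+\tn})$. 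For arbitrary height functions the paper factors $\Psi_{\txi,\nu,\xi}=\Psi_{\txi,\txi_0}\circ\Psi_{\txi_0,\nu,\xi_0}\circ\Psi_{\xi_0,\xi}$ and lifts the outer automorphisms using the categorical braid symmetries of Kashiwara--Kim--Oh--Park together with spectral shifts; these too are not relabelling functors.
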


\subsection*{Organization}
This paper is organized as follows.
We recall some fundamental facts on $\Cc_n$ and Brito--Chari's inflation in \S\ref{Sec:prel}.
We give a brief review on the quantum Grothendieck ring $K_t(\Cc_n)$ and its presentation in \S\ref{Sec:qGroth}. 
The main part of this paper is \S\ref{Sec:main}, where we define the quantum inflations in \S\ref{Ssec:const}, prove their compatibility with the canonical bases in \S\ref{Ssec:main}, and verify Brito--Chari's conjecture in \S\ref{Ssec:compare}.
We discuss the categorification of the quantum Grothendieck ring $K_t(\Cc_n)$ and the quantum inflations in the final \S\ref{Sec:caton}.

\section{Preliminaries} \label{Sec:prel}
In this section, after fixing our notation in \S\ref
{Ssec:notation}, we quickly review some basic facts on the representation theory of the quantum loop algebra of type $\mathrm{A}$ in \S\ref{Ssec:basic}.
Then, we recall the definition of the inflations introduced by Brito--Chari~\cite{BC} and their conjecture in \S\ref{Ssec:BC}. 

\subsection{Notation} \label{Ssec:notation}
Throughout the paper, we use the following conventions.
For a mathematical statement $P$, we set $\delta(P)$ to be $1$ or $0$ according that $P$ is true or false.
As a special case, we set $\delta_{i,j} \seq \delta(i=j)$.
We denote by $\Z, \Q$, and $\C$ the sets of integers, rational numbers, and complex numbers respectively.
For $a,b \in \Z$ with $a \le b$, we set $[a,b] \seq \{ k \in \Z \mid a \le k \le b\}$.  
Given $n \in \Z$, we write $a \equiv_n b$ when $a -b \in n\Z$.

Fix $n \in \Z_{> 1}$. 
Let $P_n=\bigoplus_{i \in [1,n]} \Z \ep_i$ be a free abelian group of rank $n$ equipped with a standard bilinear pairing  $(\ep_i,\ep_j) = \delta_{i,j}$.
The symmetric group $\SG_n$ of degree $n$ acts on $P_n$ from the left in the natural way.
For $a,b \in [1,n]$ with $a < b$, we put $\alpha_{a,b} \seq \ep_a - \ep_b \in P_n$. 
The set $R_n^+ \seq \{ \alpha_{a,b} \mid a,b \in [1,n], a < b \}$ is identical to the set of positive roots of type $\mathrm{A}_{n-1}$.
Let $I_n \seq [1,n-1].$
For each $i \in I_n$, let $\alpha_i \seq \alpha_{i,i+1} \in P_n$ be the $i$-th simple root and $s_i \in \SG_n$ the $i$-th simple reflection (the transposition of $i$ and $i+1$). 
Let $c_{i,j} \seq (\alpha_i, \alpha_j)$ be the Cartan integer for $i,j \in I_n$. 

We consider the set 
\[ \hI_n \seq \{ (i,p) \in I_n \times \Z \mid p \equiv_2 i \}\]
and the ring of Laurent polynomials 
\[ \cY_n \seq \Z[Y_{i,p}^{\pm 1} \mid (i,p) \in \hI_n].\]
Let $\cM_n^+$ (resp.\ $\cM_n$) be the subset of $\cY_n$ consisting of all the monomials (resp.\ Laurent monomials) in the variables $Y_{i.p}$, $(i,p) \in \hI_n$.
It is an abelian monoid (resp.\ group) by the multiplication.
We refer to an element $m \in \cM_n^+$ as a \textit{dominant monomial}.

\subsection{Quantum loop algebras and $q$-characters}
\label{Ssec:basic} 
Let $U_q(\Lsl_{n})$ be the quantum loop algebra, associated with the complex simple Lie algebra $\fsl_n$ of type $\mathrm{A}_{n-1}$.
This is a Hopf algebra over $\C$.
We refer to \cite[Chapter 12]{CP} for its precise definition.
In this paper, we assume that $q$ is not a root of unity.

Recall that simple finite-dimensional modules of type $\mathbf{1}$ over $U_q(\Lsl_{n})$ are parametrized by the multiplicative monoid $(1 + z\C[z])^{I_n}$ of Drinfeld polynomials up to isomorphism by the fundamental result of Chari--Pressley~\cite{CP, CP95}.  
For $\pi \in (1 + z\C[z])^{I_n}$, we denote by $L(\pi)$ the corresponding simple $U_q(\Lsl_{n})$-module.
We regard the monoid $\cM_n^+$ of dominant monomials as a submonoid of $(1 + z\C[z])^{I_n}$ through the correspondence
\[ Y_{i,p} \mapsto (1-\delta_{i,j}q^pz)_{j \in I_n}.\]
Then we consider the category  $\Cc_n$ of finite-dimensional $U_q(\Lsl_{n})$-modules whose composition factors are isomorphic to $L(m)$'s for $m \in \cM_n^+$.
This category is the same as Hernandez--Leclerc's category $\Cc_\Z$ for $U_q(L\mathfrak{sl}_n)$ in \cite{HL10}.
(Note that the symbol $\Cc_n$ has a different meaning in \cite{HL10}.)
The category $\Cc_n$ is closed under taking tensor products and duals. 
Therefore, it carries a natural structure of rigid monoidal abelian category.  

Let $K(\Cc_n)$ be the Grothendieck ring of the category $\Cc_n$.
It comes with the free $\Z$-basis $\{ [L(m)] \mid m \in \cM_n^+\}$ of simple isomorphism classes.
The $q$-character map, in the sense of Frenkel--Reshetikhin \cite{FR}, induces an injective ring homomorphism
\[ \chi_q \colon K(\Cc_n) \to \cY_n.\]
By Frenkel--Mukhin \cite{FM}, the image of $\chi_q$ coincides with the intersection
\begin{equation} \label{eq:capK}
\bigcap_{i \in I_n} \Z[Y_{i,p}(1+A_{i,p+1}^{-1}), Y_{j,s}^{\pm 1}\mid j \neq i, p \equiv_2 i, s \equiv_2 j],
\end{equation}
where $A_{i,p} \seq Y_{i,p-1}Y_{i,p+1}Y^{-1}_{i-1,p}Y^{-1}_{i+1,p}$ with $Y_{0,p} = Y_{n,p} = 1$ by convention.
In what follows, we often identify $K(\Cc_n)$ with the image of $\chi_q$.

A simple module of the form $L(Y_{i,p})$ for some $(i,p) \in \hI_n$ is called a \textit{fundamental module}.
As a commutative ring, $K(\Cc_n)$ is freely generated by the classes of fundamental modules.

\subsection{Brito--Chari's inflation} \label{Ssec:BC}
Let $n, \tn \in \Z_{>1}$ satisfying $\tn/n \in \Z$.
In \cite{BC}, Brito--Chari introduced a injective ring homomorphism
\[ \Psi_{\tn,n} \colon K(\Cc_{n}) \to K(\Cc_\tn)\]
given by the assignment $\chi_q(L(Y_{i,p})) \mapsto \chi_q(L(Y_{\tn i/n, \tn p/n}))$ for all $(i,p) \in \hI_{n}$.
The homomorphism $\Psi_{\tn,n}$ is called the \textit{inflation}.

\begin{Conj}[{Brito--Chari \cite[Conjecture 3.2]{BC}}]
\label{Conj:BC}
The inflation $\Psi_{\tn,n}$ respects the simple isomorphism classes.
More precisely, letting \[\psi_{\tn,n} \colon \cM_{n}^+ \to \cM_{\tn}^+\] be the homomorphism given by $Y_{i,p} \mapsto Y_{\tn i/n,\tn p/n}$ for $(i,p) \in \hI_{n}$, we have
\begin{equation} \label{eq:conj}
\Psi_{\tn,n}([L(m)]) = [L(\psi_{\tn,n}(m))]
\end{equation}
for any $m \in \cM_{n}^+$.  
\end{Conj}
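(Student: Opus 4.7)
The plan is to derive Conjecture~\ref{Conj:BC} as a straightforward consequence of Theorem~\ref{Thm:main_intro} combined with Nakajima's classical-limit theorem~\cite{Nak} in type $\mathrm{ADE}$, which states that each simple $(q,t)$-character in $K_t(\Cc_n)$ specializes at $t \to 1$ to the corresponding simple $q$-character in $K(\Cc_n)$ (and analogously for $K_t(\Cc_\tn)$).

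First I would single out a triple $(\xi, \txi, \nu)$ for which the classical limit of the quantum inflation $\Psi_{\txi,\nu,\xi}$ coincides with Brito--Chari's inflation $\Psi_{\tn,n}$. Under the hypothesis $\tn/n \in \Z$ of Conjecture~\ref{Conj:BC}, the natural choice is the strictly increasing function $\nu(i) \seq (\tn/n)\, i$ for $i \in [1,n]$, together with height functions $\xi \colon I_n \to \Z$ and $\txi \colon I_\tn \to \Z$ chosen to be compatible with the scaling $(i,p) \mapsto (\tn i/n, \tn p/n)$ on the labeling sets $\hI_n$ and $\hI_\tn$. Since both $K(\Cc_n)$ and $K(\Cc_\tn)$ are, as commutative rings, freely generated by the classes of fundamental modules, the desired identification $\Psi_{\txi,\nu,\xi}|_{t=1} = \Psi_{\tn,n}$ reduces to a direct comparison on the generators $[L(Y_{i,p})]$.

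Next, Theorem~\ref{Thm:main_intro} guarantees that $\Psi_{\txi,\nu,\xi}$ sends each canonical basis element (the simple $(q,t)$-character attached to some $L(m)$ with $m \in \cM_n^+$) to a canonical basis element of $K_t(\Cc_\tn)$; the bosonic-extension presentation pins down the image as the simple $(q,t)$-character attached to $L(m')$, where $m' \in \cM_\tn^+$ is the image of $m$ under the induced monomial map, which by the previous step is exactly $\psi_{\tn,n}(m)$. Applying Nakajima's theorem on both sides and specializing $t \to 1$ then converts this identity of canonical basis elements into $\Psi_{\tn,n}([L(m)]) = [L(\psi_{\tn,n}(m))]$ in $K(\Cc_\tn)$, which is precisely \eqref{eq:conj}.

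The principal obstacle is the first step: choosing the two height functions so that the parameters of the two bosonic-extension presentations are genuinely matched by the scaling $(i,p) \mapsto (\tn i/n, \tn p/n)$, and then verifying on fundamental classes that the classical limit of $\Psi_{\txi,\nu,\xi}$ lands on the nose on Brito--Chari's generators rather than on some reparametrized version of them. Once this compatibility is in place, the deduction from Theorem~\ref{Thm:main_intro} and Nakajima's specialization result is formal.
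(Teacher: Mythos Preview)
Your proposal is correct and follows essentially the same route as the paper: choose the increasing height functions $\xi_0(i)=i$, $\txi_0(i)=i$ and $\nu(i)=\tn i/n$, verify via the explicit description of $\phi_{\xi_0}^{-1}$ that the induced monomial map $\psi_{\txi_0,\nu,\xi_0}$ agrees with $\psi_{\tn,n}$ on each $Y_{i,p}$ (hence on all of $\cM_n^+$, since for increasing height functions the map is a monoid homomorphism), deduce that $\Psi_{\txi_0,\nu,\xi_0}|_{t=1}=\Psi_{\tn,n}$ on the fundamental generators, and then combine Theorem~\ref{Thm:main_intro} with Nakajima's specialization theorem. The only point to make precise is that the injective map $\psi_{\txi,\nu,\xi}$ furnished by Theorem~\ref{Thm:main_intro} is in general merely piecewise linear, so identifying it with $\psi_{\tn,n}$ on arbitrary monomials (not just on the $Y_{i,p}$) genuinely uses the increasing choice of height functions; the paper isolates this as Proposition~\ref{Prop:main}.
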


In \cite[Theorem 2]{BC}, Brito--Chari verifies the relation \eqref{eq:conj} for some special classes of simple modules $L(m)$ such as (usual and higher order) Kirillov--Reshetikhin modules, snake modules, and arbitrary simple modules in a level-one subcategory $\Cc_\xi$ associated with an increasing height function $\xi$ as in \cite{HLad}.

Later in \S\ref{Ssec:compare}, we give a proof of Conjecture~\ref{Conj:BC} by constructing an analog of the inflation $\Psi_{\tn,n}$ between the quantum Grothendieck rings and showing that it respects their canonical bases.

\section{Quantum Grothendieck rings of type $\mathrm{A}$} 
\label{Sec:qGroth}
In this section, we recall the definition of the quantum Grothendieck ring of the category $\Cc_n$ and their canonical bases following Nakajima~\cite{Nak} and Hernandez~\cite{Her}. 
We also recall its presentation as the bosonic extension of the half of the quantized enveloping algebra of $\mathfrak{sl}_n$ due to Hernandez--Leclerc~\cite{HL}.
The presentation depends on the choice of a height function on the Dynkin diagram of $\mathfrak{sl}_n$ and plays a key role in our construction of the quantum analog of inflation in \S\ref{Sec:main}. 

\subsection{Definition}
Let $z$ be a formal parameter and $C(z) \seq (\frac{z^{c_{i,j}}-z^{-c_{i,j}}}{z-z^{-1}})_{i,j \in I_n}$ a deformed Cartan matrix. 
For any $i,j \in I_n$, the $(i,j)$-entry $\tC_{i,j}(z)$ of the inverse matrix $C(z)^{-1}$ is a rational function in $z$.
We write its Laurent expansion at $z=0$ as
\[ \tC_{i,j}(z) = \sum_{k \in \Z} \tc_{i,j}(k) z^k.\]
It turns out that all the coefficients $\tc_{i,j}(k)$ are integers.
It is known that we have $\tc_{i,j}(k) = 0$ if $k \le 0$ or $k \equiv_2 i+j$ (see \cite[\S2]{HL}).

Define the skew-symmetric bilinear map $\Nn \colon \cM_n \times \cM_n \to \Z$ of abelian groups by $\Nn(Y_{i,p}, Y_{j,s}) = \Nn_{i,j}(p-s)$, where
\[\Nn_{i,j}(k) \seq \tc_{i,j}(k-1)-\tc_{i,j}(k+1) -\tc_{i,j}(-k-1) +\tc_{i,j}(-k+1),  \]
for any $i,j \in I_n$ and $k \in \Z$.
An explicit formula of $\Nn_{i,j}(k)$ is given in Example \ref{Ex:phi} below.

Let $t$ be another formal parameter with a square root $t^{1/2}$. 
We endow the free $\Z[t^{\pm 1/2}]$-module 
\[\cY_{n,t} \seq \Z[t^{\pm 1/2}] \otimes_\Z \cY_n = \bigoplus_{m \in \cM_n} \Z[t^{\pm 1/2}] m\] with a $\Z[t^{\pm 1/2}]$-bilinear product $*$ by $m* m' \seq t^{\Nn(m,m')/2}mm'$
for $m, m' \in \cM_n$.
The resulting $\Z[t^{\pm 1/2}]$-algebra  $\cY_{n,t} = (\cY_{n,t}, *)$ is a quantum torus, which gives a non-commutative deformation of $\cY_n$. 
The specialization at $t^{1/2} = 1$ yields a ring homomorphism $\ev_{t=1} \colon \cY_{n,t} \to \cY_n$.
In what follows, we often identify an element $y \in \cY_n$ with $1 \otimes y \in \cY_{n,t}$. 
It gives an inclusion $\cY_n = 1 \otimes \cY_n \subset \cY_{n,t}$ of a subgroup (not of a subalgebra).

We have the natural anti-involution $y \mapsto \ol{y}$ of $\cY_{n,t}$ given by $\ol{t^{1/2}} = t^{-1/2}$ and $\ol{m} = m$ for all $m \in \cM_n$. 
We call it the \textit{bar-involution}.
Each element of the subgroup $\cY_n \subset \cY_{n,t}$ is fixed by this bar-involution.

For each $i \in I_n$, let $K_{t,i}$ be the $\Z[t^{\pm 1/2}]$-subalgebra of the quantum torus $\cY_{n,t}$ generated by the set 
\[ \{Y_{i,p}(1+A_{i,p+1}^{-1}) \mid p \equiv_2 i \} \cup \{ Y_{j,s}^{\pm 1}\mid j \neq i, s \equiv_2 j\}.\]
Following Hernandez \cite{Her}, we define the \textit{quantum Grothendieck ring} $K_t(\Cc_n)$ to be the  intersection of them:
\[ K_t(\Cc_n) \seq \bigcap_{i \in I_n} K_{t,i}. \]
Compare to \eqref{eq:capK}.
By construction, $K_t(\Cc_n)$ is a $\Z[t^{\pm 1/2}]$-subalgebra of $\cY_{n,t}$.
By \cite[Corollary 3.6]{Nak} or \cite[Theorem 6.2]{Her}, we have 
\[\ev_{t=1}(K_t(\Cc_n)) = \chi_q(K(\Cc_n)) \simeq K(\Cc_n).\]

\subsection{Canonical bases}
By \cite[Proposition 9.3 \& Theorem 9.4]{HO}, the $q$-character of a fundamental module $\chi_q(L(Y_{i,p}))$ belongs to $K_t(\Cc_n)$. 
This is a special feature of type $\mathrm{A}$ (and $\mathrm{B}$).

\begin{Lem} \label{Lem:comm}
For $(i,p), (j,s) \in \hI_n$, we have 
\[\chi_{q}(L(Y_{i,p})) * \chi_{q}(L(Y_{j,s})) = \chi_q(L(Y_{j,s})) * \chi_q(L(Y_{i,p})) \quad \text{if $|p-s| < |i-j| + \delta_{i,j}$.} \]
\end{Lem}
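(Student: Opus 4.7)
The plan is to establish the commutation by proving the stronger pointwise identity $\Nn(m, m') = 0$ for every monomial $m$ in $\mathrm{supp}\,\chi_q(L(Y_{i,p}))$ and every $m' \in \mathrm{supp}\,\chi_q(L(Y_{j,s}))$. Since both $(q,t)$-characters lie in $\cY_n \subset \cY_{n,t}$ (hence are $\Z$-linear combinations of monomials with no powers of $t^{1/2}$), the identity
\[
\chi_q(L(Y_{i,p})) * \chi_q(L(Y_{j,s})) - \chi_q(L(Y_{j,s})) * \chi_q(L(Y_{i,p})) = \sum_{m,m'} a_m b_{m'}\bigl(t^{\Nn(m,m')/2} - t^{-\Nn(m,m')/2}\bigr) m m'
\]
reduces the desired commutativity to this pointwise vanishing. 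The case $i = j$ is immediate: the parity conditions $p \equiv_2 i \equiv_2 s$ combined with $|p - s| < 1$ force $p = s$, so the two characters coincide.

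For the main case $i \ne j$ and $|p - s| < |i - j|$, I write every $m$ in the support via the Frenkel--Mukhin algorithm as $m = Y_{i,p}\prod_\alpha A_{k_\alpha,r_\alpha}^{-1}$, where the multiset of indices satisfies the cone constraint $r_\alpha \geq p + |k_\alpha - i| + 1$; likewise $m' = Y_{j,s}\prod_\beta A_{l_\beta,u_\beta}^{-1}$ with $u_\beta \geq s + |l_\beta - j| + 1$. The core computational inputs are the two elementary pairings
\[
\Nn(A_{k,r}, Y_{j,s}) = 2\delta_{k,j}\bigl(\delta_{r-s,1} - \delta_{r-s,-1}\bigr),
\]
and an analogous formula for $\Nn(A_{k,r}, A_{l,u})$, supported only on $(|k-l|,|r-u|) \in \{(0,2),(1,1)\}$; both follow by expanding $A_{k,r} = Y_{k,r-1}Y_{k,r+1}Y_{k-1,r}^{-1}Y_{k+1,r}^{-1}$ and invoking the recursion $\tC_{k-1,j}(z) + \tC_{k+1,j}(z) = (z+z^{-1})\tC_{k,j}(z) - \delta_{k,j}$ for the inverse quantized Cartan matrix.

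Bilinearity of $\Nn$ splits $\Nn(m,m')$ into four pieces. The $(Y,Y)$-piece $\Nn_{i,j}(p-s)$ vanishes under $|p-s|<|i-j|$ by the standard support bound $\tc_{i,j}(k) = 0$ for $|k| \leq |i-j|$ in type $\mathrm{A}$ (which one reads off the product formula $\tC_{i,j}(z) = [\min(i,j)]_z [n-\max(i,j)]_z / [n]_z$). The two cross-terms $(Y,A^{-1})$ and $(A^{-1},Y)$ vanish termwise: a nonzero elementary pairing would force, respectively, $l_\beta = i$ with $u_\beta = p\pm 1$ or $k_\alpha = j$ with $r_\alpha = s\pm 1$, but the FM cone constraints combined with $|p-s|<|i-j|$ rule out both (e.g.\ $u_\beta \geq s + |i-j|+1 > p + 1$).

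The main obstacle is the $(A^{-1}, A^{-1})$-piece. Unlike the other three, individual contributions $\Nn(A_{k_\alpha,r_\alpha}, A_{l_\beta,u_\beta})$ can genuinely be nonzero under the distance hypothesis (e.g.\ for $n=5$, $i=1$, $j=3$, one finds $\Nn(A_{3,3}, A_{3,1}) = 2$), so the vanishing must come from a summed cancellation. The resolution exploits that the $A^{-1}$-factors of an FM monomial form interlocking ``snakes'' with consecutive differences $(k_{\alpha+1},r_{\alpha+1})-(k_\alpha,r_\alpha) \in \{(\pm 1,1)\}$: grouping the double sum over $(\alpha,\beta)$ by column and invoking the Cartan recursion termwise produces a telescoping in which each ``column match'' contribution $\delta_{k,l}(\delta_{r-u,2}-\delta_{r-u,-2})$ is cancelled by adjacent-column contributions $\delta_{|k-l|,1}(\delta_{r-u,1}-\delta_{r-u,-1})$, leaving only boundary terms that themselves vanish under $|p-s| < |i-j|$ by the same FM cone argument. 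Alternatively, the required vanishing may be extracted directly from the compatibility of fundamental $(q,t)$-characters with the bosonic extension structure established in \cite[Proposition~9.3 \& Theorem~9.4]{HO}.
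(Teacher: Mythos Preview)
Your strategy of establishing the \emph{pointwise} vanishing $\Nn(m,m')=0$ for every pair of monomials is too strong: that claim is false. Take $n=5$, $(i,p)=(2,0)$, $(j,s)=(4,0)$, so that $|p-s|=0<2=|i-j|$. The monomial
\[
m \;=\; Y_{2,0}A_{2,1}^{-1}A_{1,2}^{-1}A_{3,2}^{-1} \;=\; Y_{1,3}^{-1}Y_{3,3}^{-1}Y_{2,2}Y_{4,2}
\]
appears in $\chi_q(L(Y_{2,0}))$, and $m'=Y_{4,0}A_{4,1}^{-1}=Y_{4,2}^{-1}Y_{3,1}$ appears in $\chi_q(L(Y_{4,0}))$. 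With your own formulas, every term in the bilinear expansion of $\Nn(m,m')$ vanishes except $\Nn(A_{3,2},A_{4,1})=-2$, so $\Nn(m,m')=-2\ne 0$. Your telescoping argument for the $(A^{-1},A^{-1})$-piece cannot work here because the $A^{-1}$-factors $\{(2,1),(1,2),(3,2)\}$ of $m$ do not form a snake at all: the set branches at $(2,1)$. The ``snake'' description you invoke is only valid for $i\in\{1,n-1\}$; for general $i$ the $A^{-1}$-support of a monomial of $\chi_q(L(Y_{i,p}))$ is an arbitrary order ideal in a two-dimensional poset.

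The commutation asserted in the lemma therefore comes from cancellation between \emph{different} monomial pairs sharing the same product, not from pointwise vanishing. In the example, $mm'=Y_{1,3}^{-1}Y_{3,3}^{-1}Y_{2,2}Y_{3,1}$ also factors as $m_1m_1'$ with $m_1=Y_{2,0}A_{2,1}^{-1}A_{1,2}^{-1}$ and $m_1'=Y_{4,0}A_{4,1}^{-1}A_{3,2}^{-1}$, and one checks $\Nn(m_1,m_1')=+2$, so $(t^{-1}-t)+(t-t^{-1})=0$. A direct combinatorial proof would have to organize such cancellations systematically, which is a genuinely different and substantially harder task than what you outlined. The paper does not attempt this route at all: it simply invokes \cite[Lemma~9.11(3)]{FHOO}, together with the identification $F_t(Y_{i,p})=\chi_q(L(Y_{i,p}))$ in type~$\mathrm{A}$ recorded in Remark~\ref{Rem:Ft}.
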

\begin{proof}
This follows from \cite[Lemma 9.11(3)]{FHOO}.
See also Remark~\ref{Rem:Ft} below.
\end{proof}

Write a dominant monomial $m \in \cM_n^+$ as $m= Y_{i_1,p_1} \cdots Y_{i_d, p_d}$ with $p_1 \ge \cdots \ge p_d$.
Then, we set
\begin{equation} \label{eq:E} 
E_t(m) \seq t^{-\sum_{1 \le k < l \le d}\Nn(Y_{i_k,p_k}, Y_{i_l, p_l})/2} \chi_q(L(Y_{i_1,p_1})) * \cdots * \chi_{q}(L(Y_{i_d,p_d})). 
\end{equation}
Since $\chi_q(L(Y_{i,p}))$ and $\chi_q(L(Y_{j,p}))$ mutually commute with respect to $*$ by Lemma~\ref{Lem:comm}, and $\Nn_{i,j}(0)=0$ for any $i,j \in I_n$, the element $E_t(m)$ is well-defined (independent of the ordering of the factors of $m$).   
The set $\{ E_t(m) \mid m \in \cM_n^+\}$ gives a free $\Z[t^{\pm 1/2}]$-basis of $K_t(\Cc_n)$, called the \textit{standard basis}.

\begin{Rem} \label{Rem:Ft}
In \cite{Her}, the standard basis element $E_t(m)$ is defined in terms of the element $F_t(Y_{i,p})$ such that $Y_{i,p}$ is the unique dominant monomial occurring in $F_t(Y_{i,p})$, instead of $\chi_q(L(Y_{i,p}))$.
Our definition of $E_t(m)$ is equivalent to the one in loc.\ cit.\ as $F_t(Y_{i,p}) = \chi_q(L(Y_{i,p}))$ holds in type $\mathrm{A}$.
Note that the last equality is not true in general.
\end{Rem}

\begin{Thm}[{Nakajima \cite{Nak}, Hernandez \cite{Her}, see also \cite[Remark 7.8]{HO}}] \label{Thm:canon}
For each dominant monomial $m \in \cM_n^+$, there exists a unique element $\chi_{q,t}(L(m))$ of $K_t(\Cc_n)$ satisfying 
\[ \ol{\chi_{q,t}(L(m))} = \chi_{q,t}(L(m)) \quad \text{and} \quad \chi_{q,t}(L(m)) - E_t(m) \in \sum_{m' \in \cM_n^+} t^{-1}\Z[t^{-1}]E_t(m').\]
Moreover, the set $\{ \chi_{q,t}(L(m)) \mid m \in \cM_n^+ \}$ forms a free $\Z[t^{\pm 1/2}]$-basis of $K_t(\Cc_n)$, called the canonical basis of $K_t(\Cc_n)$.
\end{Thm}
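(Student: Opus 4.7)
The plan is to follow the classical Kazhdan--Lusztig strategy for producing canonical bases. First I would introduce Nakajima's partial order $\preceq$ on $\cM_n^+$: declare $m' \preceq m$ iff $m(m')^{-1} \in \cM_n$ is a product of monomials $A_{i,p}$ (with non-negative exponents). Under this order the intervals $\{m' \in \cM_n^+ \mid m' \preceq m\}$ are finite, which is essential for the inductive arguments that follow.

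The central preparatory step is to establish that the bar involution is unipotent upper-triangular with respect to the standard basis:
\[ \ol{E_t(m)} = E_t(m) + \sum_{\substack{m' \prec m \\ m' \in \cM_n^+}} c_{m,m'}(t)\, E_t(m'), \qquad c_{m,m'}(t) \in \Z[t^{\pm 1/2}]. \]
I would combine three facts. First, each fundamental $q$-character $\chi_q(L(Y_{i,p}))$ lies in the bar-fixed subgroup $\cY_n \subset \cY_{n,t}$, so is bar-invariant. Second, because the bar is an anti-involution, after applying it the fundamental factors of $E_t(m)$ appear in the reverse order. Third, by Lemma~\ref{Lem:comm} two fundamental characters commute under $*$ whenever $|p-s| < |i-j| + \delta_{i,j}$, so only ``far-apart'' pairs produce nontrivial $t$-twists when one reorders back to the ordering $p_1 \ge \cdots \ge p_d$. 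The normalization $t^{-\sum \Nn/2}$ in \eqref{eq:E} is calibrated precisely so that the diagonal coefficient becomes $1$. The off-diagonal Laurent-polynomial corrections, which a priori live in $\cY_{n,t}$, are then repackaged into standard basis elements using the unipotent upper-triangularity of the transition $E_t(m') \in m' + \sum_{m'' \prec m'} \Z[t^{\pm 1/2}]\, m''$ and the bar-stability of $K_t(\Cc_n)$.

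Once bar-triangularity is in hand, the Kazhdan--Lusztig argument gives both uniqueness and existence of $\chi_{q,t}(L(m))$. For uniqueness, if $X, Y$ are two candidates, then $D := X - Y$ is bar-invariant and lies in $\sum_{m' \prec m} t^{-1}\Z[t^{-1}]\, E_t(m')$; expanding $D = \sum f_{m'}(t)E_t(m')$ and iterating the bar relation downward along $\preceq$ forces each $f_{m'}(t)$ to lie in $t^{-1}\Z[t^{-1}] \cap t\Z[t] = 0$. For existence, I would build $\chi_{q,t}(L(m))$ by inductively adding to $E_t(m)$ a correction $\sum_{m' \prec m} f_{m'}(t)\, E_t(m')$, where $f_{m'}(t) \in t^{-1}\Z[t^{-1}]$ is uniquely determined by an equation of the form $f_{m'}(t) - f_{m'}(t^{-1}) = -c_{m,m'}(t) + (\text{corrections from earlier inductive steps})$; such $f_{m'}$ exists and is unique because the right-hand side is skew under $t \mapsto t^{-1}$, a skew-symmetry itself forced by $\ol{\ol{E_t(m)}} = E_t(m)$ and the inductive bar-invariance of the $\chi_{q,t}(L(m'))$ already constructed.

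The hard part will be the bar-triangularity calculation. In general type the $q$-character $\chi_q(L(Y_{i,p}))$ need not lie in $K_t(\Cc_n)$, and the standard basis must instead be built from the twisted generators $F_t(Y_{i,p})$ of Remark~\ref{Rem:Ft}; what makes type $\mathrm{A}$ tractable is the coincidence $F_t(Y_{i,p}) = \chi_q(L(Y_{i,p}))$ together with the commutation pattern of Lemma~\ref{Lem:comm}, which together keep the reordering bookkeeping of the $\Nn$-pairing manageable. Once that step is secured, the remaining Kazhdan--Lusztig induction is essentially formal.
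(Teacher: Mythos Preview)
The paper does not give its own proof of this theorem; it is stated as a known result attributed to Nakajima and Hernandez, with citations but no argument. Your proposal reconstructs the standard Kazhdan--Lusztig mechanism underlying those references, and the outline is essentially correct: one shows that $\{E_t(m)\}$ is a $\Z[t^{\pm 1/2}]$-basis of $K_t(\Cc_n)$, that the bar involution acts unitriangularly on it with respect to Nakajima's partial order, and then runs the usual KL induction for existence and uniqueness.

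One refinement to your bar-triangularity step: rather than tracking the reordering of the fundamental factors in $\overline{E_t(m)}$ directly, it is cleaner to argue at the level of Laurent monomials. Each $\chi_q(L(Y_{i,p}))$ has $Y_{i,p}$ as its unique $\preceq$-maximal monomial, so $E_t(m)$ has $m$ as its unique maximal monomial, and the $t$-normalization in \eqref{eq:E} is chosen precisely so that this leading coefficient is $1$. Since the bar involution fixes each Laurent monomial in $\cM_n$, the element $\overline{E_t(m)}$ also has leading monomial $m$ with coefficient $1$; unitriangularity of the change of basis from $\{m\}_{m \in \cM_n^+}$ to $\{E_t(m')\}$ then yields the desired expansion $\overline{E_t(m)} = E_t(m) + \sum_{m' \prec m} c_{m,m'}(t) E_t(m')$ without any explicit reordering bookkeeping. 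You also did not spell out the ``Moreover'' clause, but it is immediate: the transition from $\{E_t(m)\}$ to $\{\chi_{q,t}(L(m))\}$ is unitriangular over $\Z[t^{-1}]$, hence invertible over $\Z[t^{\pm 1/2}]$.
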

\begin{Rem}
The transition matrix from the standard basis to the canonical basis turns out to be unitriangular with respect to the so-called Nakajima partial ordering of dominant monomials. 
In fact, the original defining condition of the canonical basis in \cite{Nak, Her} requires this unitriangularity as well. 
The weaker condition in Theorem \ref{Thm:canon} is enough to characterize the basis, as explained in \cite[Remark 7.8]{HO}.
\end{Rem}

The element $\chi_{q,t}(L(m))$ is called the \textit{$t$-analog of $q$-character} (or the \textit{$(q,t)$-character} for short) of the simple module $L(m)$.
For a fundamental module $L(Y_{i,p})$,  we have 
$\chi_{q,t}(L(Y_{i,p})) = \chi_q(L(Y_{i,p})).$

The following fundamental result was proved by Nakajima using the geometry of quiver varieties.
\begin{Thm}[{Nakajima \cite{Nak}}] \label{Thm:Nak}
For each $m \in \cM_n^+$, we have
\[\ev_{t=1} \chi_{q,t}(L(m)) = \chi_q (L(m)).\] 
\end{Thm}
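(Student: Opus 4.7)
The plan is to follow Nakajima's original strategy via graded quiver varieties of type $\mathrm{A}_{n-1}$. To each $W = (w_{i,p})_{(i,p) \in \hI_n}$ one attaches a graded affine quiver variety $\qv_0(W)$ together with resolutions $\pi_{V,W} \colon \qv(V,W) \to \qv_0(W)$ indexed by auxiliary graded vector spaces $V$. The affinization $\qv_0(W)$ admits a stratification indexed by dominant monomials $m' \preceq m$ (in Nakajima's partial order), where $m = \prod Y_{i,p}^{w_{i,p}}$ determines $W$.

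First, I would recall Nakajima's geometric realization of $q$-characters: for a generic point $x_m$ of the open stratum of $m$, one has
\[ \chi_q(L(m)) \;=\; \sum_{V} \chi\bigl(\pi_{V,W}^{-1}(x_m)\bigr) \cdot m \cdot A^{-\alpha(V)}, \]
where $\chi$ denotes topological Euler characteristic. Upgrading $\chi$ to the Poincaré polynomial in $t^{1/2}$ produces a refined generating function that lies in the quantum torus $\cY_{n,t}$; Nakajima shows it in fact belongs to $K_t(\Cc_n)$ and coincides, up to normalization, with the standard basis element $E_t(m)$ when $m$ is expressed as a product of fundamental factors in decreasing order of $p$.

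The main tool is the decomposition theorem applied to $\pi_{V,W}$: the pushforward of the shifted constant sheaf decomposes as a direct sum of shifts of the IC complexes $\IC(m')$ of closures of strata $m' \preceq m$, with multiplicities controlled by Kazhdan--Lusztig type polynomials in $t^{1/2}$. Tracking Poincaré polynomials, the IC stalk at $x_m$ assembles into an element that is bar-invariant (by Verdier duality on $\IC(m)$) and has a unitriangular expansion in $\{E_t(m')\}_{m' \preceq m}$ whose off-diagonal coefficients lie in $t^{-1}\Z[t^{-1}]$ (parity / purity of IC complexes). By the uniqueness clause of Theorem~\ref{Thm:canon}, this geometric element must equal $\chi_{q,t}(L(m))$. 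Setting $t=1$ replaces Poincaré polynomials by total Euler characteristics of IC stalks; summing these over all strata via the very same decomposition rebuilds $\chi(\pi_{V,W}^{-1}(x_m))$, which by the first step coincides with $\chi_q(L(m))$, proving the theorem.

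The main obstacle is the identification of the \emph{representation-theoretic} $q$-character of $L(m)$ (defined via the $\ell$-weight decomposition of the Drinfeld generators) with the \emph{geometric} Euler characteristic generating function above. Establishing this rests on Nakajima's construction of an action of $U_q(\Lsl_n)$ on the equivariant $K$-theory of the smooth varieties $\qv(V,W)$, the realization of $L(m)$ as the head of a standard module attached to $W$, and the matching of this action with the Drinfeld presentation. All geometric input, including semismallness properties needed for purity and the identification of strata with $\ell$-weight spaces, is specific to this quiver-theoretic framework and is the principal technical depth of the argument.
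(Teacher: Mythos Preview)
The paper does not supply a proof of this theorem; it is stated with attribution to Nakajima~\cite{Nak} and used as a black box. Your sketch is a faithful high-level outline of Nakajima's original argument via graded quiver varieties, the decomposition theorem, and purity, so in that sense you are reproducing the approach that the paper is citing rather than diverging from it.

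One small point of caution: in your second paragraph you say that the Poincar\'e-polynomial refinement ``coincides, up to normalization, with the standard basis element $E_t(m)$''. In Nakajima's setup the Poincar\'e series of the fibres over a generic point of the stratum of $m$ give the $t$-analogue of the \emph{standard} (tensor/local Weyl) module, and it is the IC-stalk side that yields the simple $(q,t)$-character; you have this sorted out correctly in the next paragraph, but the phrasing in the second paragraph blurs the two and could be tightened. Apart from that, the logical skeleton---geometric realization of $q$-characters, decomposition theorem giving a bar-invariant unitriangular expression, uniqueness from Theorem~\ref{Thm:canon}, and specialization $t\to 1$ collapsing Poincar\'e polynomials to Euler characteristics---is the correct summary of the cited result.
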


\subsection{Presentations} \label{Ssec:pres}
We say that a function $\xi \colon I_n \to \Z$ is a \textit{height function} if it satisfies $\xi(1) \equiv_2 1$ and $|\xi(i) - \xi(i+1)| = 1$ for all $i \in [1,n-2]$. 
A height function $\xi$ defines a Coxeter element $\tau_\xi \in \SG_n$ by
$\tau_\xi \seq s_{i_1} s_{i_2} \cdots s_{i_{n-1}}$,
where we chose a total ordering $I_n = \{ i_1, i_2, \ldots , i_{n-1}\}$ satisfying $\xi(i_1) \le \xi(i_2) \le \cdots \le \xi(i_{n-1})$.
The element $\tau_\xi$ does not depend on the choice of such a total ordering. 
Following \cite[\S2]{HL}, we recursively define the bijection 
\[\phi_\xi \colon \hI_n \xrightarrow{\sim} R_n^+ \times \Z \]
by the following two requirements:
\begin{enumerate}
\item we have $\phi_\xi(i,\xi(i)) = (\sum_{j \in I_n, \xi_j - \xi_i = |j-i|} \alpha_j, 0)$ for any $i \in I_n$,
\item if $\phi_{\xi}(i,p) = (\alpha,k)$, we have
\[\phi_\xi(i,p\pm2) = \begin{cases}
(\tau_\xi^{\pm 1}(\alpha), k) & \text{if $\tau_\xi^{\pm 1}(\alpha) \in R^+_n$}, \\
(-\tau_\xi^{\pm 1}(\alpha), k\pm 1) & \text{if $\tau_\xi^{\pm 1}(\alpha) \not\in R^+_n$}.
\end{cases}
\]
\end{enumerate}

\begin{Rem}
Our $\tau_\xi$ is the same as $\tau^{-1}$ in  \cite[\S2]{HL}. 
Our bijection $\phi_\xi$ is the same as the bijection $\varphi$ in loc.\ cit.
\end{Rem}

For future use, we recall a relation between the bilinear form $\Nn$ and the bijection $\phi_\xi$.
\begin{Lem} \label{Lem:FO}
Let $\xi \colon I_n \to \Z$ be a height function.
For any $(i,p), (j,s) \in \hI_n$ with $(i,p) \neq (j,s)$, we have
\[ \Nn(Y_{i,p}, Y_{j,s}) = (-1)^{k+l+\delta(p\ge s)}(\alpha, \beta),\]
where $(\alpha,k) = \phi_\xi(i,p)$ and $(\beta, l) = \phi_\xi(j,s)$.
Moreover, we have 
\[\Nn(Y_{i,p}, Y_{j,s}) = 0 \quad \text{if $p \ge s$ and $k < l$.} \]
\end{Lem}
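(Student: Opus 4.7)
The plan is to verify the formula by induction adapted to the recursive structure of $\phi_\xi$, exploiting the fact that the Coxeter element $\tau_\xi \in \SG_n$ acts on $P_n$ as an isometry of the symmetric bilinear form $(\cdot,\cdot)$. The bijection $\phi_\xi$ is determined by (i) its initial values $\phi_\xi(i, \xi(i)) = (\alpha_i^{(\xi)}, 0)$ with $\alpha_i^{(\xi)} = \sum_{l : \xi(l) - \xi(i) = |l-i|}\alpha_l$, and (ii) the recursion $\phi_\xi(i, p\pm 2) = (\tau_\xi^{\pm 1}\alpha, k)$ or $(-\tau_\xi^{\pm 1}\alpha, k\pm 1)$ depending on the sign of $\tau_\xi^{\pm 1}\alpha$. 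Accordingly, I would (a) establish the identity in the base case $p = \xi(i)$, $s = \xi(j)$ (so $k = l = 0$), and (b) show that both sides transform compatibly under the shifts $p \mapsto p+2$ and $s \mapsto s+2$.

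For the base case, the formula reduces to $\Nn(Y_{i,\xi(i)}, Y_{j,\xi(j)}) = -(\alpha_i^{(\xi)}, \alpha_j^{(\xi)})$, which I would check by plugging into the explicit formula for $\Nn_{i,j}(p-s)$ recalled in Example \ref{Ex:phi} and doing a case analysis on the relative position of $i,j$ and the slope of $\xi$ between them. For the inductive step, on the $\Nn$ side the shift $p \mapsto p+2$ translates the integer argument of $\Nn_{i,j}$, while on the positive-root side the first argument of the bilinear form is transformed by $\tau_\xi$ (up to sign and a possible level jump). Combined with $\tau_\xi$-invariance $(\tau_\xi\alpha, \beta) = (\alpha, \tau_\xi^{-1}\beta)$, this matches the recursion for $\phi_\xi$ when $\tau_\xi\alpha \in R_n^+$; when $\tau_\xi\alpha \notin R_n^+$, the root flip $\alpha \mapsto -\tau_\xi\alpha$ contributes a factor $-1$ and the level jump $k \mapsto k+1$ contributes another factor $-1$, so the global sign $(-1)^{k+l+\delta(k\ge l)}(\alpha,\beta)$ is preserved provided one correctly tracks whether $\delta(k\ge l)$ toggles at the same step.

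The main obstacle is precisely this sign bookkeeping: the indicator $\delta(k\ge l)$ toggles when $k$ crosses $l$, and such a crossing may or may not be synchronous with a sign-flip of $\tau_\xi^{\pm 1}\alpha$. A clean way to organize it is to propagate $p$ one step at a time, analyzing separately the cases ``$\tau_\xi\alpha$ stays positive'' and ``$\tau_\xi\alpha$ flips sign,'' while tracking whether $\delta(k\ge l)$ toggles simultaneously. Two structural sanity checks are available: the skew-symmetry $\Nn(y,y') = -\Nn(y',y)$ constrains the precise form of the sign via the identity $(-1)^{k+l+\delta(k\ge l)} + (-1)^{k+l+\delta(l\ge k)} = 0$ when $k \neq l$; and the Coxeter invariance of $(\cdot,\cdot)$ on $P_n$, which ultimately reflects a periodicity of the inverse Cartan matrix $\tC(z)$ under rotation by $\tau_\xi$, is the structural input that makes the induction close up.
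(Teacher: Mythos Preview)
The paper's own proof is a two-line citation of \cite[Proposition~3.2]{HL} and \cite[Lemma~9.11(2)]{FHOO}; no direct argument is given. Your plan to argue by induction along the recursion defining $\phi_\xi$ is therefore a genuinely different route.

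There are, however, two real gaps. First, a circularity: the closed formula for $\Nn_{i,j}(k)$ that you intend to plug into for the base case is, in this paper, \emph{deduced from} the present lemma (see the last sentence of Example~\ref{Ex:phi}). You would first have to derive it independently by inverting the type-$\mathrm{A}$ quantum Cartan matrix---possible, but a separate computation you do not mention. Second, and more seriously, the inductive step does not close. When you shift $p\mapsto p+2$ with $s$ fixed, the left-hand side becomes $\Nn_{i,j}(p-s+2)$, while on the right $(\alpha,\beta)$ becomes $(\pm\tau_\xi\alpha,\beta)$. Your appeal to $(\tau_\xi\alpha,\beta)=(\alpha,\tau_\xi^{-1}\beta)$ only rewrites this as the right-hand side of the identity at the pair $(i,p)$, $(j,s-2)$, whose difference is again $p-s+2$; so what you have really described is the diagonal move $(p,s)\mapsto(p+2,s+2)$. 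Under that move the left-hand side is literally unchanged and the right-hand side is unchanged up to the sign bookkeeping you discuss, so even granting the signs you only propagate the identity within a single fibre $p-s=\mathrm{const}$. Starting from the lone base point $(\xi(i),\xi(j))$ you therefore reach only the value $p-s=\xi(i)-\xi(j)$ for each pair $(i,j)$. To actually change $p-s$ one needs an identity expressing $\tc_{i,j}(m+1)-\tc_{i,j}(m-1)$ itself as a root pairing under a power of $\tau_\xi$, and that is precisely the content of the results in \cite{HL,FHOO} that the paper cites.
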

\begin{proof}
This follows from \cite[Proposition 3.2]{HL} and \cite[Lemma 9.11(2)]{FHOO}. 
\end{proof}

\begin{Ex} \label{Ex:phi}
The most important case for the purpose of this paper is when our height function $\xi$ is \textit{increasing}, namely when it satisfies 
\begin{equation}
\xi(i) = i + 2c  \quad \text{for any $i \in I_n$}
\end{equation}
with a constant $c \in \Z$.
In this case, the Coxeter element $\tau_\xi$ is $s_1 s_2 \cdots s_{n-1}$, which acts on $P_n$ by $\ep_i \mapsto \ep_{i+1}$ for all $i \in [1,n]$. 
Here $\ep_{n+1} \seq \ep_1$ by convention. 
The bijection $\phi_\xi$ is easy to compute as follows.
For an integer $p \in \Z$, we write it as
\begin{equation} \label{eq:euclid}
p = k_n(p)n + r_n(p) \quad \text{with $k_n(p) \in \Z$ and $r_n(p) \in [1,n]$}.
\end{equation}
Then, we have 
\[ \phi_\xi(i,\xi(i)+2p) = \begin{cases}
(\alpha_{r_n(p), r_n(p+i)}, 2k_n(p)+1) & \text{if $r_n(p) < r_n(p+i)$}, \\
(\alpha_{r_n(p+i), r_n(p)}, 2k_n(p)+2)& \text{if $r_n(p) > r_n(p+i)$},
\end{cases}\]
for any $i \in I_n$ and $p \in \Z$. 
Therefore, the inverse bijection $\phi_\xi^{-1}$ is given as
\begin{equation} \label{eq:phi-1}
\begin{cases}
\phi_\xi^{-1}(\alpha_{a,b}, 2k+1) = (b-a,\xi(b-a) +2(kn+a)), \\
\phi_\xi^{-1}(\alpha_{a,b}, 2k+2) = (n+a-b, \xi(n+a-b) + 2(kn+b)).
\end{cases}
\end{equation}
With these computations, Lemma~\ref{Lem:FO} yields an explicit formula of $\Nn_{i,j}(k)$:
for any $i,j \in I_n$ and $k \ge \delta_{i,j}$, we have
\[ \Nn_{i,j}(k) = \delta(k \equiv_{2n} i+j) - \delta(k \equiv_{2n} i-j) - \delta(k \equiv_{2n} -i+j) + \delta(k \equiv_{2n} -i-j).\]
\end{Ex}

Now, we recall a presentation of the quantum Grothendieck ring $K_t(\Cc_n)$, which depends on the choice of height function.
\begin{Def}[Bosonic extension]
Let $\hA_n$ be a $\Q(t^{1/2})$-algebra presented by the generators
$\{e_{i,k} \mid i \in I_n, k\in \Z \}$ with the following relations:
\begin{enumerate}
\renewcommand{\theenumi}{\rm R\arabic{enumi}}
\item \label{R1} 
For any $k \in \Z$ and any $i,j \in I_n$, we have 
\[ \begin{cases} 
e_{i,k}^2 e_{j,k} - (t+t^{-1})e_{i,k} e_{j,k} e_{i,k} + e_{j,k} e_{i,k}^2 = 0 & \text{if $|i-j|=1$}, \\
e_{i,k} e_{j,k} - e_{j,k} e_{i,k} = 0& \text{if $|i-j|>1$}.
\end{cases} \]
\item \label{R2}
For any $k,k' \in \Z$ with $k <k'$ and $i,j \in I_n$, we have 
\[e_{i,k}e_{j,k'} = t^{(-1)^{k+k'}c_{i,j}} e_{j,k'}e_{k,i} + (1-t^{-2})\delta_{(i,k), (j,k'-1)}.\]
\end{enumerate}
The algebra $\hA_n$ is often called the \textit{bosonic extension} of the positive half $U_n^+$ of the quantized enveloping algebra of $\mathfrak{sl}_{n}$ (see \S \ref{Ssec:const}). 
\end{Def}

Put 
\[K_t(\Cc_n)_{loc} \seq K_t(\Cc_n) \otimes_{\Z[t^{\pm 1/2}]} \Q(t^{1/2}).\]

\begin{Thm}[{Hernandez--Leclerc~\cite{HL}, F.--Hernandez--Oh--Oya~\cite{FHOO}}]
\label{Thm:Psi}
\hfil
\begin{enumerate}
\item \label{Thm:Psi1} 
Let $\xi \colon I_n \to \Z$ be a  height function. 
There is an isomorphism 
\[ \Psi_\xi \colon \hA_n \xrightarrow{\sim} K_t(\Cc_n)_{loc} \]
of $\Q(t^{1/2})$-algebras satisfying 
\[\Psi_\xi(e_{j,k}) = \chi_{q,t}(L(Y_{i,p})) \quad \text{if $\phi_\xi(i,p) = (\alpha_j, k)$} \]
for any $(j,k) \in I_n \times \Z$. 
\item \label{Thm:Psi2}
Let $\xi' \colon I_n \to \Z$ be another height function. 
The automorphism $\Psi_{\xi, \xi'} \seq \Psi_{\xi} \circ \Psi_{\xi'}^{-1}$ of $K_t(\Cc_n)_{loc}$ induces a permutation of the simple $(q,t)$-characters. 
In other words, there is a certain permutation $\psi_{\xi,\xi'}$ of the set $\cM_n^+$ such that we have
\[ \Psi_{\xi, \xi'}(\chi_{q,t}(L(m))) = \chi_{q,t}(L(\psi_{\xi,\xi'}(m)))\]
for any $m \in \cM_n^+$.
In particular, $\Psi_{\xi,\xi'}$ restricts to a $\Z[t^{\pm 1/2}]$-algebra automorphism of $K_t(\Cc_n)$.
\end{enumerate}
\end{Thm}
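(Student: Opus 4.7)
The strategy has three main components: constructing $\Psi_\xi$ as an algebra map on generators by verifying the relations (R\ref{R1}) and (R\ref{R2}), proving it is a bijection, and then upgrading the composition $\Psi_{\xi,\xi'}$ from a permutation of fundamental classes to a permutation of all simple $(q,t)$-characters. I would define $\Psi_\xi$ on the generators $e_{j,k}$ precisely by the prescribed formula $\Psi_\xi(e_{j,k}) = \chi_{q,t}(L(Y_{i,p}))$ with $\phi_\xi(i,p)=(\alpha_j,k)$, and then check that the right-hand sides satisfy the defining relations of $\hA_n$.

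For relation (R\ref{R2}), I would argue as follows. When $(j,k') \neq (i, k+1)$, the $t$-commutation
\[
\chi_{q,t}(L(Y_{i,p})) \ast \chi_{q,t}(L(Y_{j,s})) = t^{\Nn(Y_{i,p}, Y_{j,s})}\chi_{q,t}(L(Y_{j,s})) \ast \chi_{q,t}(L(Y_{i,p}))
\]
should hold with no correction term (extending Lemma~\ref{Lem:comm} beyond its strict commutation range using the highest-weight triangularity of $(q,t)$-characters), and Lemma~\ref{Lem:FO} identifies the exponent $\Nn(Y_{i,p}, Y_{j,s}) = (-1)^{k+k'}(\alpha_i, \alpha_j)\cdot\text{sign}=(-1)^{k+k'}c_{i,j}$. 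The ``anomalous'' case $(j,k')=(i,k+1)$ corresponds via $\phi_\xi^{-1}$ to a pair of fundamentals in the classical T-system (exchange) relation, from which the correction term $(1-t^{-2})$ is read off. For the $t$-Serre relations (R\ref{R1}), the $n-1$ generators $\{e_{j,k}\}_{j\in I_n}$ at fixed $k$ map to fundamental $(q,t)$-characters that lie in a single ``slice'' of $\hI_n$ under $\phi_\xi$; this slice is combinatorially an $\mathrm{A}_{n-1}$ Dynkin quiver, and the corresponding subalgebra of $K_t(\Cc_n)_{loc}$ is identified with $U_t^+(\fsl_n)$ via Hernandez--Leclerc's isomorphism for the level-one subcategory $\Cc_\xi$. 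The $t$-Serre relations then descend from $U_t^+(\fsl_n)$.

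For bijectivity, both algebras carry natural $\N^{\cM_n^+}$-graded structures: on the right, the standard basis $\{E_t(m)\}$; on the left, a PBW-type basis of ordered monomials in the $e_{j,k}$'s following an enumeration compatible with $\phi_\xi$. I would match them using a triangularity argument (ordering by $\prec$ on $\cM_n^+$) to conclude $\Psi_\xi$ is an isomorphism.

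For part (\ref{Thm:Psi2}), observe that by construction $\Psi_{\xi,\xi'} = \Psi_\xi\circ \Psi_{\xi'}^{-1}$ sends fundamental $(q,t)$-characters to fundamental $(q,t)$-characters by a permutation, induced by $(i',p') \mapsto (i,p)$ where $\phi_\xi(i,p) = \phi_{\xi'}(i',p')$. The question is whether this extends to the full basis $\{\chi_{q,t}(L(m))\}$. The plan is to show that $\Psi_{\xi,\xi'}$ commutes with the bar involution (both $\Psi_\xi$ and $\Psi_{\xi'}$ intertwine the natural bar involution on $\hA_n$ with the one on $\cY_{n,t}$) and permutes the standard basis: the latter follows because $E_t(m)$ is expressible as a normalized $\ast$-product of fundamental classes, and the permutation of fundamentals is $\ast$-compatible up to bar-invariant scalars. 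Then the uniqueness part of Theorem~\ref{Thm:canon} forces $\Psi_{\xi,\xi'}(\chi_{q,t}(L(m))) = \chi_{q,t}(L(\psi_{\xi,\xi'}(m)))$ for the induced permutation $\psi_{\xi,\xi'}$ of $\cM_n^+$, and simultaneously shows that $\Psi_{\xi,\xi'}$ preserves the integral form $K_t(\Cc_n)$.

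The main obstacle is the verification of relation (R\ref{R1}): the $t$-Serre relations among fundamental $(q,t)$-characters are not immediate from their definition, and identifying the level-$k$ slice with $U_t^+(\fsl_n)$ requires the full machinery of the Hernandez--Leclerc isomorphism for $\Cc_\xi$ (which in turn relies on Nakajima's Kazhdan--Lusztig-type algorithm or, alternatively, the categorification via quiver Hecke algebras). A secondary subtlety is the extension of the permutation of fundamentals in (\ref{Thm:Psi2}) to all simple classes, which hinges on the bar-compatibility of $\Psi_{\xi,\xi'}$ and the fact that the standard basis itself is permuted.
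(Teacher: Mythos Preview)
The paper does not supply its own proof of this theorem: it is stated with attribution to \cite{HL} and \cite{FHOO} and left unproved. Nonetheless, your proposal for part~\eqref{Thm:Psi2} contains a genuine error that is worth flagging explicitly.

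You claim that $\Psi_{\xi,\xi'}$ sends fundamental $(q,t)$-characters to fundamental $(q,t)$-characters via the bijection $(i',p') \mapsto (i,p)$ with $\phi_\xi(i,p)=\phi_{\xi'}(i',p')$, and then deduce that it permutes the standard basis $\{E_t(m)\}$. This is false in general. The inverse $\Psi_{\xi'}^{-1}$ sends $\chi_{q,t}(L(Y_{i',p'}))$ to the dual PBW generator $\iota_k(\tE^*_{Q_{\xi'}}(\alpha))$ in $\hA_n$, where $(\alpha,k)=\phi_{\xi'}(i',p')$; but $\Psi_\xi$ maps the \emph{different} PBW generator $\iota_k(\tE^*_{Q_\xi}(\alpha))$ to the fundamental class indexed by $\phi_\xi^{-1}(\alpha,k)$. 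Since $\tE^*_{Q_{\xi'}}(\alpha)$ and $\tE^*_{Q_\xi}(\alpha)$ are PBW vectors for different reduced expressions of $w_0$, they typically disagree when $\alpha$ is not simple, so $\Psi_{\xi,\xi'}(\chi_{q,t}(L(Y_{i',p'})))$ need not be a fundamental class. This is precisely the content of Remark~\ref{Rem:psi}: the permutation $\psi_{\xi,\xi'}$ of $\cM_n^+$ is only piecewise linear and does \emph{not} arise from a permutation of the variables $Y_{i,p}$. Consequently your argument that $\Psi_{\xi,\xi'}$ permutes the standard basis collapses, and with it the appeal to the uniqueness in Theorem~\ref{Thm:canon}.

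The route actually taken in \cite{FHOO} (and recorded in Remark~\ref{Rem:FHOO2}) is different: one factors $\Psi_{\xi,\xi'}$ as a composition of a spectral parameter shift and the braid symmetries $\sigma_i$ of \cite{KKOPbr}, and proves that each $\sigma_i$ individually permutes the canonical basis. This reduces the problem to the case where $\xi$ and $\xi'$ differ by a single simple reflection, where one has more control. Your sketch for part~\eqref{Thm:Psi1} is closer in spirit to the arguments in \cite{HL}, though the ``anomalous'' boson relation with the $(1-t^{-2})$ term requires the quantum $T$-system rather than the classical one, and the identification of the level-$k$ slice with $U_n^+$ is indeed the substantial input from \cite{HL}.
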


\begin{Rem} \label{Rem:psi}
In general, the permutation $\psi_{\xi, \xi'}$ of $\cM_n^+$ in Theorem~\ref{Thm:Psi}~\eqref{Thm:Psi2} is only piecewise linear, not a monoid automorphism. 
By construction, it satisfies $\psi_{\xi,\xi'}(Y_{i,p}) = Y_{j,s}$ if $\phi_{\xi'}(i,p) = \phi_\xi(j,s) = (\alpha, k)$ with $\alpha$ being a simple root.
But it is not always true if $\alpha$ is a non-simple root.
\end{Rem}

\begin{Rem} \label{Rem:FHOO2}
If $\xi - \xi' = 2c$ for some constant $c \in \Z$, the automorphism $\Psi_{\xi,\xi'}$ coincides with the spectral parameter shift $Y_{i,p} \mapsto Y_{i,p+2c}$. 
In general, $\Psi_{\xi,\xi'}$ can be written as a composition of the braid symmetries $\sigma_{i}$, $i \in I_n$, introduced in \cite{KKOPbr}, which we use in the proof of Proposition~\ref{Prop:Upsilon} below, and a spectral parameter shift. 
See \cite[the paragraph before Proposition 6.2]{FHOO2}.
\end{Rem}

\section{Quantum inflations} \label{Sec:main}
In this section, we introduce a collection of injective homomorphisms from $K_t(\Cc_n)$ to $K_t(\Cc_\tn)$ with $n < \tn$, which we call the quantum inflations as it includes a quantum analog of Brito--Chari's inflation in \S\ref{Ssec:BC} as a special case.
Our main theorem (= Theorem \ref{Thm:main}) in \S\ref{Ssec:main} asserts that they respect the canonical bases, and hence their classical limits respect the simple classes.    
In particular, we obtain in \S\ref{Ssec:compare} a proof of Conjecture~\ref{Conj:BC}. 

\subsection{Construction} \label{Ssec:const}
Let $n \in \Z_{>1}$ and $U_n^+$ the positive half of the quantized enveloping algebra of $\mathfrak{sl}_n$ over $\Q(t^{1/2})$.
Namely, $U_n^+$ is the $\Q(t^{1/2})$-algebra presented by the generators $\{ e_i \mid i \in I_n\}$ with the quantum Serre relations:
\[ \begin{cases} 
e_{i}^2 e_{j} - (t+t^{-1})e_{i} e_{j} e_{i} + e_{j} e_{i}^2 = 0 & \text{if $|i-j|=1$}, \\
e_{i} e_{j} - e_{j} e_{i} = 0& \text{if $|i-j|>1$}.
\end{cases} \]
For each $k \in \Z$, the assignment $e_i \mapsto e_{i,k}$ gives rise to a $\Q(t^{1/2})$-algebra homomorphism 
\[\iota_k \colon U_n^+ \to \hA_n.\] 

In the sequel, we use the following $t$-commutator notation:
\[ [x,y]_{t^{1/2}} \seq t^{1/2}xy - t^{-1/2}yx.\]
For any $a,b \in [1,n]$ with $a < b$, we define the element $e(\alpha_{a,b})$ of $U_n^+$ by
\begin{equation} \label{eq:ealpha}
e(\alpha_{a,b}) \seq \frac{[e_a,[e_{a+1}, \cdots[e_{b-2}, e_{b-1}]_{t^{1/2}} \cdots ]_{t^{1/2}}]_{t^{1/2}}}{(t-t^{-1})^{b-a-1}}.
\end{equation}
We have $e(\alpha_i) = e_i$ for any $i \in I_n$ by definition. 
Note that $e(\alpha_{a,b})$ and $e(\alpha_{c,d})$ commute if $b < c$.

\begin{Lem} \label{Lem:tcomm}
For any $n \in \Z_{>1}$ and $a,b,c \in [1,n]$ with $a<b<c$, we have
\[ e(\alpha_{a,c}) = \frac{[e(\alpha_{a,b}), e(\alpha_{b,c})]_{t^{1/2}}}{t-t^{-1}}.\]
\end{Lem}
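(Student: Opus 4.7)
My plan is to prove Lemma \ref{Lem:tcomm} by induction on $b-a$.

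For the base case $b=a+1$, the identity follows directly from the definition \eqref{eq:ealpha}. Indeed, the inner nested $t$-commutator appearing in the numerator of $e(\alpha_{a,c})$ is exactly $(t-t^{-1})^{c-a-2}\, e(\alpha_{a+1,c})$ by \eqref{eq:ealpha} applied with left endpoint $a+1$. Dividing out the remaining factor of $(t-t^{-1})^{c-a-1}$ yields
\[ e(\alpha_{a,c}) = \frac{[e_a,\, e(\alpha_{a+1,c})]_{t^{1/2}}}{t-t^{-1}}, \]
which is the desired identity, since $e_a = e(\alpha_{a,a+1}) = e(\alpha_{a,b})$ when $b=a+1$.

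For the inductive step, suppose $b \ge a+2$. Applying the base case to the triple $(a, a+1, b)$ rewrites $e(\alpha_{a,b}) = [e_a,\, e(\alpha_{a+1,b})]_{t^{1/2}}/(t-t^{-1})$, so that
\[ [e(\alpha_{a,b}),\, e(\alpha_{b,c})]_{t^{1/2}} = \frac{1}{t-t^{-1}}\, \bigl[[e_a,\, e(\alpha_{a+1,b})]_{t^{1/2}},\, e(\alpha_{b,c})\bigr]_{t^{1/2}}. \]
The key observation is that $e_a$ commutes with $e(\alpha_{b,c})$: the latter is a polynomial in $\{e_j \mid j \ge b\}$ and the hypothesis $b \ge a+2$ forces $|j-a| > 1$ for every such $j$, so the quantum Serre relation $e_a e_j = e_j e_a$ yields the commutation. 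With this in hand, I would invoke the following $t$-Jacobi identity: whenever $xy = yx$, one has
\[ \bigl[[x,u]_{t^{1/2}},\, y\bigr]_{t^{1/2}} = \bigl[x,\, [u,y]_{t^{1/2}}\bigr]_{t^{1/2}}, \]
which is a short direct calculation from the definition of $[\,\cdot\,,\,\cdot\,]_{t^{1/2}}$ (both sides expand to $t\cdot xuy - uxy - xyu + t^{-1}yux$ on one side and $t \cdot xuy - xyu - uyx + t^{-1}yux$ on the other, and the two differ by $u(xy-yx) = 0$). Applying this with $(x,u,y) = (e_a,\, e(\alpha_{a+1,b}),\, e(\alpha_{b,c}))$, and then the inductive hypothesis to the triple $(a+1,b,c)$ (permitted since $b - (a+1) < b - a$), gives $[e(\alpha_{a+1,b}), e(\alpha_{b,c})]_{t^{1/2}} = (t-t^{-1})\, e(\alpha_{a+1,c})$, and a final use of the base case identifies $[e_a, e(\alpha_{a+1,c})]_{t^{1/2}} = (t-t^{-1})\, e(\alpha_{a,c})$. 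Combining these equalities delivers the required identity.

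The only delicate point is verifying the $t$-Jacobi identity for commuting pairs; everything else is bookkeeping driven by the recursive shape of \eqref{eq:ealpha} and the quantum Serre commutation relations.
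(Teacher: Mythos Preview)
Your proof is correct and follows essentially the same strategy as the paper: both arguments hinge on the $t$-Jacobi identity $[[x,u]_{t^{1/2}},y]_{t^{1/2}} = [x,[u,y]_{t^{1/2}}]_{t^{1/2}}$ when $x$ and $y$ commute, combined with an induction (the paper inducts on $c-a$, you on $b-a$, which amounts to the same unwinding of the nested commutator in \eqref{eq:ealpha}). One trivial slip: in your parenthetical expansion of the left-hand side the third term should be $-yxu$ rather than $-xyu$, so the difference of the two sides is actually $(xy-yx)u - u(xy-yx)$, which of course still vanishes under $xy=yx$.
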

\begin{proof}
For $x,y,z \in U_n^+$, we have $[[x,y]_{t^{1/2}}, z]_{t^{1/2}} = [x,[y,z]_{t^{1/2}}]_{t^{1/2}}$ if $xz = zx$.
With this remark in mind, the assertion follows by induction on $c-a$.
\end{proof}

Let $n, \tn \in \Z_{>1}$ with $n < \tn$, and $\nu \colon [1,n] \to [1,\tn]$ a (strictly) increasing function.
Consider an assignment from $\{e_{i,k} \mid i \in I_n, k \in \Z\} \subset \hA_n$ to $\hA_\tn$ given by
\begin{equation} \label{eq:anu}
e_{i,k} \mapsto \iota_k(e(\alpha_{\nu(i), \nu(i+1)})) \in \hA_\tn \quad \text{for each $i \in I_n$ and $k \in \Z$}.
\end{equation}
\begin{Ex}\label{Ex:2to3}
For example, when $n=2$ and $\tn =3$, there are three increasing functions $\nu \colon [1,2] \to [1,3]$. The assignment \eqref{eq:anu} is written as 
\[ 
e_{1,k} \mapsto \begin{cases}
e_{1,k} & \text{if $(\nu(1), \nu(2)) = (1,2)$}, \\
(t-t^{-1})^{-1}[e_{1,k},e_{2,k}]_{t^{1/2}} &  \text{if $(\nu(1), \nu(2)) = (1,3)$}, \\
e_{2,k} & \text{if $(\nu(1), \nu(2)) = (2,3)$}, 
\end{cases}
\] 
for each $k \in \Z$.
\end{Ex}

\begin{Prop} \label{Prop:Upsilon}
For any increasing function $\nu \colon [1,n] \to [1,\tn]$, the assignment \eqref{eq:anu} gives rise to a $\Q(t^{1/2})$-algebra homomorphism
\[ \Upsilon_\nu \colon \hA_n \to \hA_\tn. \]
\end{Prop}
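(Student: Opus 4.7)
The plan is to verify directly that the images $E_{i,k} \seq \iota_k(e(\alpha_{\nu(i), \nu(i+1)}))$ in $\hA_\tn$ satisfy the defining relations (R1) and (R2) of $\hA_n$. For each fixed $k$ all images $E_{i,k}$ lie in the subalgebra $\iota_k(U_\tn^+)$ and $\iota_k$ is an algebra homomorphism, so the Serre relations (R1) reduce to the corresponding identities for $\{e(\alpha_{\nu(i), \nu(i+1)})\}_{i \in I_n}$ inside $U_\tn^+$. When $|i-j|\ge 2$, strict monotonicity of $\nu$ makes the intervals $[\nu(i),\nu(i+1)-1]$ and $[\nu(j), \nu(j+1)-1]$ disjoint and separated by Dynkin-distance $\ge 2$, so the simple generators appearing in $e(\alpha_{\nu(i),\nu(i+1)})$ commute pairwise with those in $e(\alpha_{\nu(j),\nu(j+1)})$, hence so do the composite root vectors. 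When $|i-j|=1$, say $j=i+1$ with $a=\nu(i)$, $b=\nu(i+1)$, $c=\nu(i+2)$, I would reduce the Serre relation for the pair $(e(\alpha_{a,b}),e(\alpha_{b,c}))$, via Lemma~\ref{Lem:tcomm}, to the $t$-commutation $e(\alpha_{a,b})\,e(\alpha_{a,c}) = t\,e(\alpha_{a,c})\,e(\alpha_{a,b})$, and then prove the latter by induction on $c-a$ using the recursive formula defining $e(\alpha_{a,c})$ and the commutativity facts collected from the $|i-j|\ge 2$ case.

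For the bosonic relations (R2), the central input is the identity
\[
(\alpha_{\nu(i),\nu(i+1)},\,\alpha_{\nu(j),\nu(j+1)}) \;=\; 2\delta_{i,j}-\delta_{i,j+1}-\delta_{i+1,j} \;=\; c_{i,j},
\]
which follows from the injectivity of $\nu$ via $(\alpha_{a,b},\alpha_{c,d})=\delta_{a,c}-\delta_{a,d}-\delta_{b,c}+\delta_{b,d}$. With this in hand I would separate two cases according to whether the diagonal condition $(i,k)=(j,k'-1)$ can occur. In the \emph{non-diagonal} case (either $k'\ge k+2$, or $k'=k+1$ with $i\ne j$), I would prove a weight-commutation lemma in $\hA_\tn$ by induction on weight: for weight-homogeneous $x,y\in U_\tn^+$ with $\wt x=\alpha$ and $\wt y=\beta$,
\[
\iota_k(x)\,\iota_{k'}(y) \;=\; t^{(-1)^{k+k'}(\alpha,\beta)}\,\iota_{k'}(y)\,\iota_k(x),
\]
provided either $k'\ge k+2$, or $k'=k+1$ and the generator supports of $x,y$ are disjoint. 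The base case is (R2) (the delta correction never fires under either hypothesis), and the inductive step factors a monomial $y=y_1y_2$ and telescopes the two exponents. Applying this with $x=e(\alpha_{\nu(i),\nu(i+1)})$, $y=e(\alpha_{\nu(j),\nu(j+1)})$, and using strict monotonicity of $\nu$ to ensure disjoint supports when $i\ne j$, produces the required identity with no correction term.

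The diagonal case $k'=k+1$, $i=j$ is the critical one. Setting $X=e(\alpha_{a,b})$ with $a=\nu(i)$ and $b=\nu(i+1)$, the claim reduces to
\[
\iota_k(X)\,\iota_{k+1}(X) \;=\; t^{-2}\,\iota_{k+1}(X)\,\iota_k(X) \;+\; (1-t^{-2}).
\]
I would prove this by induction on $b-a$, the base case $b-a=1$ being (R2) directly. In the inductive step I would pick $a<m<b$, rewrite $X = \frac{[e(\alpha_{a,m}),\, e(\alpha_{m,b})]_{t^{1/2}}}{t-t^{-1}}$ via Lemma~\ref{Lem:tcomm}, expand the product of the two bracket expressions at levels $k$ and $k+1$, and simplify using the inductive hypothesis applied to the ``same-root'' pairs (for $e(\alpha_{a,m})$ and $e(\alpha_{m,b})$ separately) together with the disjoint-support commutation from the non-diagonal case for the cross pairs. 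The main obstacle is precisely this bookkeeping: one has to verify that the various scalar contributions conspire to cancel all residual expressions involving products of shorter brackets, leaving exactly the scalar $(1-t^{-2})$. The precise factor $1/(t-t^{-1})$ built into the definition of $e(\alpha_{a,b})$ is clearly calibrated so that composite root vectors inherit the bosonic relations of simple root vectors, and I expect this tuning to force the cancellation.
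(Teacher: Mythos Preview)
Your approach is correct in outline and genuinely different from the paper's.  You verify (R1) and (R2) by direct computation: a weight-induction lemma handles the non-diagonal part of (R2), and root-length induction via Lemma~\ref{Lem:tcomm} handles (R1) and the diagonal part of (R2).  The paper instead argues by induction on $\tn-n$.  For the base case $\tn=n+1$, where $\nu(i)=i+\delta(i>l)$ for a unique $l$, it observes that the images $\iota_k(e(\alpha_{\nu(i),\nu(i+1)}))$ can be rewritten as $\sigma_{l+1}(e_{i,k})$ for $i\le l$ and $\sigma_l^{-1}(e_{i+1,k})$ for $i\ge l$, where $\sigma_l^{\pm1}$ are the Kashiwara--Kim--Oh--Park braid automorphisms of $\hA_{\tn}$; since these are algebra automorphisms, all the relations involving the ``new'' generator at index $l$ follow for free.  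The inductive step then factors $\nu=\nu''\circ\nu'$ through an intermediate $[1,n']$ and uses Lemma~\ref{Lem:tcomm} to deduce $\Upsilon_\nu=\Upsilon_{\nu''}\circ\Upsilon_{\nu'}$.

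What your route buys is independence from the KKOP braid-symmetry theorem, at the price of the bookkeeping you flag in the diagonal case of (R2) --- which does cancel as you predict (one can check this explicitly for $b-a=2$, and the general inductive step has the same shape).  What the paper's route buys is that this hardest computation is absorbed into the statement that $\sigma_l^{\pm1}$ is an automorphism.  Two small points: for (R1) with $|i-j|=1$ you only treat the Serre relation with $e(\alpha_{a,b})$ squared; the companion relation with $e(\alpha_{b,c})$ squared reduces by the same mechanism to $e(\alpha_{a,c})\,e(\alpha_{b,c})=t\,e(\alpha_{b,c})\,e(\alpha_{a,c})$ and needs a parallel induction.  Also, in that induction on $c-a$ the sub-case $b=c-1$ does not reduce to the ``$|i-j|\ge2$ commutativity'' alone; you will need to split $e(\alpha_{a,c})$ at the left end (via $e_a$) rather than the right end, or prove both $t$-commutations simultaneously.
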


\begin{proof}
For the sake of simplicity, we put $\te_{i,k} \seq \iota_k(e(\alpha_{\nu(i), \nu(i+1)}))$ in this proof.
We have to verify that the collection $\{ \te_{i,k} \mid i \in I_n, k \in \Z \} \subset \hA_\tn$ satisfies the defining relations \eqref{R1} \& \eqref{R2} of the algebra $\hA_n$.
We proceed by induction on $\tn - n \in \Z_{>0}$.

First, we consider the case when $\tn = n+1$.
Note that, for any increasing function $\nu \colon [1,n] \to [1,n+1]$, there is a unique $l \in [0,n]$ such that we have $\nu(i) = i + \delta(i > l)$ for all $i \in I_n$.
Then, we have
\begin{equation} \label{eq:te}
\te_{i,k} = \begin{cases}
e_{i,k} & \text{if $i < l$}, \\
(t-t^{-1})^{-1}[e_{l,k}, e_{l+1,k}]_{t^{1/2}} & \text{if $i = l$}, \\
e_{i+1,k} & \text{if $i > l$}.
\end{cases}
\end{equation}
The desired relations \eqref{R1} \& \eqref{R2} for $\te_{i,k}$ and $\te_{j,k'}$ with $i,j \in I_n \setminus \{l\}$ follows immediately from this formula.
For the remaining relations, we can check them by using the braid group symmetry generated by a collection $\{ \sigma_l^{\pm 1} \mid l \in I_{n+1} \}$ of $\Q(t^{1/2})$-algebra automorphisms of $\hA_{n+1}$ introduced by Kashiwara--Kim--Oh--Park in \cite{KKOPbr}.
Recall that these automorphisms are given by the formula
\begin{align*}
\sigma_l(e_{i,k}) &= \begin{cases}
(t-t^{-1})^{-1}[e_{i,k}, e_{l,k}]_{t^{1/2}} & \text{if $|i-l|=1$}, \\
e_{i,k+\delta_{i,l}} & \text{otherwise}, 
\end{cases} \\
\sigma^{-1}_l(e_{i,k}) &= \begin{cases}
(t-t^{-1})^{-1}[e_{l,k}, e_{i,k}]_{t^{1/2}} & \text{if $|i-l|=1$}, \\
e_{i,k-\delta_{i,l}} & \text{otherwise}.
\end{cases}
\end{align*}
See \cite[Theorem 2.3]{KKOPbr} (also \cite[Theorem 3.1]{KKOPbr2} for more general statement with a detailed proof).
Comparing the formula with \eqref{eq:te}, we have
\[ \te_{i,k} = \begin{cases}
\sigma_{l+1} (e_{i,k}) & \text{if $i \le l$}, \\
\sigma_{l}^{-1}(e_{i+1,k}) & \text{if $i \ge l$}.
\end{cases} \]
Using this, the relations \eqref{R1} \& \eqref{R2} for $\te_{i,k}$ and $\te_{j,k'}$ with $i = l$ or $j=l$ can be checked immediately.  
Thus, $\Upsilon_{\nu}$ is well-defined when $\tn = n+1$.

For the induction step, we note that an increasing  function $\nu \colon [1,n] \to [1,\tn]$ with $\tn - n >1$ can be factorized as $\nu = \nu'' \circ \nu'$, where $\nu' \colon [1,n] \to [1,n']$ and $\nu'' \colon [1,n'] \to [1,\tn]$ are some increasing functions with $n' \in [n+1,\tn-1]$.
By induction hypothesis, the $\Q(t^{1/2})$-algebra homomorphisms $\Upsilon_{\nu'}$ and $\Upsilon_{\nu''}$ are well-defined.
We apply Lemma~\ref{Lem:tcomm} repeatedly to find that
\[ \te_{i,k} =  (\Upsilon_{\nu''} \circ \Upsilon_{\nu'})(e_{i,k})\]
holds for any $i \in I_n$ and $k \in \Z$.
This implies that the collection $\{ \te_{i,k} \mid i \in I_n, k \in \Z \} $ satisfies the desired relations \eqref{R1} \& \eqref{R2}.
Thus, the homomorphism $\Upsilon_\nu$ is also well-defined and $\Upsilon_\nu = \Upsilon_{\nu''} \circ \Upsilon_{\nu'}$ holds. 
\end{proof}

\begin{Def}[Quantum inflations] \label{Def:infl}
Let $n, \tn \in \Z_{>1}$.
For any height functions $\xi \colon I_n \to \Z, \txi \colon I_\tn \to \Z$ and any increasing function $\nu \colon [1,n]\to [1,\tn]$, we define the \textit{quantum inflation}
\[\Psi_{\txi, \nu, \xi} \colon K_t(\Cc_n)_{loc} \to K_t(\Cc_\tn)_{loc}\] 
to be the composition $\Psi_{\txi, \nu, \xi} \seq \Psi_{\txi} \circ \Upsilon_\nu \circ \Psi_\xi^{-1}$, where $\Psi_\xi$ and $\Psi_{\txi}$ are the isomorphisms from Theorem~\ref{Thm:Psi}.
\end{Def}

\begin{Rem} \label{Rem:infl} \hfil
\begin{enumerate}
\item When $n=\tn$, the homomorphism $\Upsilon_\nu$ is the identity automorphism of $K_t(\Cc_n)$ and we have $\Psi_{\txi, \nu, \xi} = \Psi_{\txi, \xi}$ in the notation of Theorem~\ref{Thm:Psi}~\eqref{Thm:Psi2}. 
\item Let $n_1, n_2, n_3 \in \Z_{>1}$ with $n_1 \le n_2 \le n_3$.
For any height functions $\xi_i \colon I_{n_i} \to \Z$, $i \in \{1,2,3\}$, and increasing functions $\nu_i \colon [1,n_i] \to [1,n_{i+1}]$, $i \in \{1,2\}$, we have $\Psi_{\xi_3, \nu_2, \xi_2} \circ \Psi_{\xi_2, \nu_1, \xi_1} = \Psi_{\xi_3, \nu_2 \circ \nu_1, \xi_1}.$
This follows from the construction and the equality $\Upsilon_{\nu_2} \circ \Upsilon_{\nu_1} = \Upsilon_{\nu_2 \circ \nu_1}$ as we saw in the proof of Proposition~\ref{Prop:Upsilon}.
\end{enumerate}
\end{Rem}

\begin{Rem}
When $\tn = n+1$ with $\nu(1)=1$ and $\nu(n) = \tn$, the homomorphism $\Upsilon_\nu \colon \hA_n \to \hA_{n+1}$ can be thought of the bosonic extension of an edge contraction $U_n^+ \to U_{n+1}^+$ in the sense of Li~\cite{Li}.  
In general, $\Upsilon_\nu$ is the bosonic extension of an iteration of edge contractions. 
\end{Rem}

\subsection{Compatibility with canonical bases} \label{Ssec:main}
We prove that the quantum inflations respect the canonical bases of the quantum Grothendieck rings.
We retain the notation from the previous subsection.
\begin{Thm} \label{Thm:main}
The quantum inflation $\Psi_{\txi, \nu, \xi}$ in Definition~\ref{Def:infl} induces an injective map from the canonical basis of $K_t(\Cc_n)$ into the canonical basis of $K_t(\Cc_\tn)$.
In other words, there is an injective map $\psi_{\txi, \nu, \xi} \colon \cM_n^+ \to \cM_\tn^+$ such that we have
\begin{equation} \label{eq:main}
\Psi_{\txi, \nu. \xi}(\chi_{q,t}(L(m))) = \chi_{q,t}(L(\psi_{\txi,\nu,\xi}(m)))
\end{equation}
for any $m \in \cM_n^+$.
In particular, $\Psi_{\txi, \nu, \xi}$ restricts to an injective $\Z[t^{\pm1/2}]$-algebra homomorphism
\[ \Psi_{\txi, \nu, \xi} \colon K_t(\Cc_n) \hookrightarrow K_t(\Cc_\tn),\]
compatible with the bar-involutions.
\end{Thm}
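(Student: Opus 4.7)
The plan is to verify the two properties that characterize the canonical basis in Theorem~\ref{Thm:canon}---bar-invariance and unitriangularity relative to the standard basis~\eqref{eq:E}---for the homomorphism $\Psi_{\txi,\nu,\xi}$. Once both transfer, uniqueness in Theorem~\ref{Thm:canon} forces $\chi_{q,t}(L(m))\mapsto\chi_{q,t}(L(m'))$ for a well-defined $m'=\psi_{\txi,\nu,\xi}(m)$, and injectivity of this set map is inherited from the $\Q(t^{1/2})$-linear injectivity of $\Psi_{\txi,\nu,\xi}$, which in turn follows from Proposition~\ref{Prop:Upsilon} and Theorem~\ref{Thm:Psi}\eqref{Thm:Psi1}; the $\Z[t^{\pm1/2}]$-integrality of the resulting embedding is then automatic.

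Bar-compatibility of $\Psi_\xi$ and $\Psi_\txi$ is tautological, since the generators $e_{j,k}$ are bar-invariant and map to the bar-invariant fundamental $\chi_{q,t}$'s, so one reduces to showing $\Upsilon_\nu$ is bar-invariant, which by~\eqref{eq:anu} amounts to bar-invariance of each normalized iterated $t$-commutator root vector $e(\alpha_{a,b})\in U_n^+$ from~\eqref{eq:ealpha}. This is the standard bar-invariance of dual PBW root vectors and can be established by induction on $b-a$ via Lemma~\ref{Lem:tcomm}, using $\ol{[x,y]_{t^{1/2}}}=-[y,x]_{t^{1/2}}$ for bar-invariant $x,y$ together with $\ol{t-t^{-1}}=-(t-t^{-1})$: the sign contributions from the $b-a-1$ iterated brackets exactly cancel those from the denominator $(t-t^{-1})^{b-a-1}$.

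For the standard basis part, I would first reduce via Remark~\ref{Rem:infl}(2) to the case $\tn=n+1$ by factoring $\nu$ through single-step increasing functions. Proposition~\ref{Prop:Upsilon} (cf.~\eqref{eq:te}) then presents $\Upsilon_\nu$ either as a tautological generator inclusion $e_{i,k}\mapsto e_{i,k}$ or $e_{i+1,k}$ (the extremal cases $l\in\{0,n\}$) or as the action of a braid symmetry $\sigma_l^{\pm 1}$ of $\hA_{n+1}$ from \cite{KKOPbr}. The braid symmetries are known to permute the canonical basis of $K_t(\Cc_{n+1})_{loc}$ (Remark~\ref{Rem:FHOO2} and \cite{FHOO2}), which settles the non-extremal cases. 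For the remaining tautological inclusions, which under $\Psi_\xi,\Psi_\txi$ realize an inflation along the Dynkin subdiagram inclusion $I_n\subset I_{n+1}$, I would fix compatible increasing height functions (using Example~\ref{Ex:phi}) and identify each fundamental $\chi_{q,t}(L(Y_{i,p}))$ in $\Cc_n$ with a specific fundamental in $\Cc_{n+1}$; one then checks that the skew-symmetric form $\Nn$ on pairs of corresponding fundamental monomials matches across $\cY_{n,t}$ and $\cY_{n+1,t}$, which by Lemma~\ref{Lem:FO} reduces to the same underlying root-system pairing on both sides. The main obstacle is precisely this $\Nn$-bookkeeping across the two different quantum tori; once it is confirmed, the standard bases correspond under $\Psi_{\txi,\nu,\xi}$ and the proof concludes by the uniqueness in Theorem~\ref{Thm:canon}.
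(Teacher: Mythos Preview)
Your bar-invariance argument is fine and matches the paper's. The gap is in your treatment of the standard basis for the single-step case $\tn=n+1$ with $l\in[1,n-1]$. You assert that Proposition~\ref{Prop:Upsilon} ``presents $\Upsilon_\nu$ \ldots\ as the action of a braid symmetry $\sigma_l^{\pm 1}$ of $\hA_{n+1}$.'' That is not what the proof of Proposition~\ref{Prop:Upsilon} says: it only observes that $\te_{i,k}=\sigma_{l+1}(e_{i,k})$ for $i\le l$ while $\te_{i,k}=\sigma_l^{-1}(e_{i+1,k})$ for $i\ge l$, i.e.\ two \emph{different} braid symmetries applied to two \emph{different} tautological inclusions, used only locally to verify the relations \eqref{R1}--\eqref{R2}. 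In particular $\Upsilon_\nu$ is not of the form $\sigma\circ\iota$ for a single $\sigma=\sigma_m^{\pm1}$ unless $l\in\{1,n-1\}$: for $l\in[2,n-2]$ one checks directly from the formulas that $\sigma_{l+1}\circ\iota_0$ sends $e_{l+1,k}$ to $e_{l+1,k+1}$ rather than the required $e_{l+2,k}$. So the appeal to ``braid symmetries permute the canonical basis'' does not apply to $\Upsilon_\nu$ as stated, and your argument for the non-extremal case collapses. (It is plausible that $\Upsilon_\nu$ factors as a \emph{longer} braid word composed with an extremal inclusion, but you would have to prove this and then still handle the extremal case.)

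The paper sidesteps this entirely. Instead of reducing on $\tn-n$, it reduces on the height functions: by Theorem~\ref{Thm:Psi}\eqref{Thm:Psi2} one may assume both $\xi$ and $\txi$ are increasing, and then (Proposition~\ref{Prop:main}) treats \emph{all} $\nu$ at once. The point is that for the monotone quiver $Q_\xi$ the element $e(\alpha)$ of \eqref{eq:ealpha} coincides with the rescaled dual PBW generator $\tE^*_{Q_\xi}(\alpha)$, so $\Psi_\xi(\iota_k(e(\alpha)))$ is a fundamental $(q,t)$-character for \emph{every} positive root $\alpha$, not just the simple ones. Since $\Upsilon_\nu(\iota_k(e(\alpha_{a,b})))=\iota_k(e(\alpha_{\nu(a),\nu(b)}))$ by Lemma~\ref{Lem:tcomm}, every fundamental is sent to a fundamental, and then your own $\Nn$-matching via Lemma~\ref{Lem:FO} (which the paper also uses, as $(\nu_*\alpha,\nu_*\beta)=(\alpha,\beta)$) finishes the standard-basis comparison uniformly. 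Your extremal-case sketch is essentially this argument specialized to the Dynkin-subdiagram inclusions; the paper's observation is that the same mechanism already covers arbitrary $\nu$, so no case split---and no braid-symmetry detour---is needed.
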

\begin{proof}
Let $\xi' \colon I_n \to \Z$ and $\txi' \colon I_\tn \to \Z$ be another pair of height functions.
By Remark~\ref{Rem:infl}, we have $\Psi_{\txi, \nu, \xi} = \Psi_{\txi, \txi'} \circ \Psi_{\txi', \nu, \xi'} \circ \Psi_{\xi', \xi}$.
Therefore, as soon as we prove the statement for $\Psi_{\txi', \nu, \xi'}$ with an injective map $\psi_{\txi', \nu, \xi'} \colon \cM_n^+ \to \cM_\tn$, the statement for $\Psi_{\txi, \nu, \xi}$ also follows with the map $\psi_{\txi, \nu, \xi} \seq \psi_{\txi, \txi'} \circ \psi_{\txi', \nu, \xi'} \circ \psi_{\xi', \xi}$, thanks to Theorem~\ref{Thm:Psi}~\eqref{Thm:Psi2}.
Thus, it suffices to consider the special case when both height functions $\xi$ and $\txi$ are increasing, which is treated in Proposition~\ref{Prop:main} below.
\end{proof}

With a given increasing function $\nu \colon [1,n] \to [1,\tn]$, we associate an injective map $\nu_* \colon R_n^+ \to R_\tn^+$ by 
\[ \nu_*(\alpha_{a,b}) \seq \alpha_{\nu(a), \nu(b)} \quad \text{for any $a,b \in [1,n]$ with $a < b$}.\]

\begin{Prop} \label{Prop:main}
Assume that both $\xi \colon I_n \to \Z$ and $\txi \colon I_\tn \to \Z$ are increasing height functions. 
Then, the equality \eqref{eq:main} holds for any $m \in \cM_n^+$ with $\psi_{\txi, \nu, \xi} \colon \cM_n^+ \to \cM_\tn^+$ being the injective monoid homomorphism given by
\[ \psi_{\txi, \nu, \xi}(Y_{i,p}) \seq Y_{j,s} \quad \text{if $(j,s) = (\phi_\txi^{-1} \circ(\nu_* \times \id_\Z)\circ \phi_\xi)(i,p)$}\]
for each $(i,p) \in \hI_n$.
\end{Prop}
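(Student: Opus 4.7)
The plan is to verify \eqref{eq:main} first on fundamental modules and then extend to all dominant monomials via the canonical basis characterization of Theorem~\ref{Thm:canon}.  The key technical ingredient is the identification of the ``root vector elements'' $\iota_k(e(\alpha_{a,b}))$ inside the bosonic extension with specific fundamental $(q,t)$-characters under the isomorphism $\Psi_\xi$.

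First, one checks that $\psi_{\txi,\nu,\xi}$ is a well-defined injective monoid homomorphism: the underlying map $\hI_n \to \hI_\tn$ is the composition of the bijections $\phi_\xi$, $\phi_\txi^{-1}$ with the injection $\nu_* \times \id_\Z$.  Next, the crucial intermediate step is to establish that, for every positive root $\alpha_{a,b} \in R_n^+$ and every $k \in \Z$,
\[ \Psi_\xi(\iota_k(e(\alpha_{a,b}))) = \chi_{q,t}(L(Y_{i,p})), \quad \text{where } (i,p) = \phi_\xi^{-1}(\alpha_{a,b},k). \]
This extends Theorem~\ref{Thm:Psi}\eqref{Thm:Psi1} (which handles the case when $\alpha_{a,b}$ is simple) to all positive roots.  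I would prove it by induction on $b-a$, the inductive step using Lemma~\ref{Lem:tcomm} to rewrite $e(\alpha_{a,c})$ as the normalized $t$-commutator of $e(\alpha_{a,b})$ and $e(\alpha_{b,c})$, combined with the recursive description of $\phi_\xi$ via the Coxeter element $\tau_\xi$ and the explicit formulas of Example~\ref{Ex:phi}.  Granting this, equation~\eqref{eq:main} for a fundamental $Y_{i,p}$ follows by tracking through $\Psi_{\txi,\nu,\xi} = \Psi_\txi \circ \Upsilon_\nu \circ \Psi_\xi^{-1}$: the middle factor sends $\iota_k(e(\alpha_{a,b}))$ to $\iota_k(e(\alpha_{\nu(a),\nu(b)})) = \iota_k(e(\nu_*(\alpha_{a,b})))$, as one sees by iterating Lemma~\ref{Lem:tcomm} on the nested $t$-commutator expression~\eqref{eq:ealpha}.

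To extend to general $m \in \cM_n^+$, I would verify two further properties of $\Psi_{\txi,\nu,\xi}$.  First, it intertwines the bar-involutions; this reduces to the bar-invariance of each $\iota_k(e(\alpha_{a,b})) \in \hA_\tn$, which a short computation confirms since the sign $(-1)^{b-a-1}$ produced by the bar anti-involution acting on the depth-$(b-a-1)$ nested $t$-commutator cancels that of $(t-t^{-1})^{b-a-1}$.  Second, since $\nu_*$ preserves the natural inner product on $P_n$ (immediate from $(\alpha_{a,b}, \alpha_{c,d}) = \delta_{a,c} - \delta_{a,d} - \delta_{b,c} + \delta_{b,d}$ together with the injectivity of $\nu$), Lemma~\ref{Lem:FO} yields the $\Nn$-invariance
\[ \Nn(Y_{i,p}, Y_{j,s}) = \Nn(\psi_{\txi,\nu,\xi}(Y_{i,p}), \psi_{\txi,\nu,\xi}(Y_{j,s})). \]
Combined with the fundamental case and the multiplicativity of $\Psi_{\txi,\nu,\xi}$, this produces $\Psi_{\txi,\nu,\xi}(E_t(m)) = E_t(\psi_{\txi,\nu,\xi}(m))$; along the way one must reconcile the fact that $\psi_{\txi,\nu,\xi}$ may permute the $p$-order of the factors of $m$, but this is handled by Lemma~\ref{Lem:comm} and the vanishing $\Nn_{i,j}(0)=0$ whenever a swap of originally commuting factors is required.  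Together with bar-compatibility, the uniqueness clause of Theorem~\ref{Thm:canon} then delivers~\eqref{eq:main}.

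The main obstacle is the inductive identity $\Psi_\xi(\iota_k(e(\alpha_{a,b}))) = \chi_{q,t}(L(Y_{\phi_\xi^{-1}(\alpha_{a,b},k)}))$ at the level of all positive roots: it amounts to recognizing a specific $t$-commutator of two fundamental $(q,t)$-characters as another fundamental $(q,t)$-character, a subtlety reflecting the fine interaction between the bosonic extension's PBW structure and the Coxeter-translation combinatorics of fundamental modules in type $\mathrm{A}$.  The secondary bookkeeping of matching orderings in $\Psi_{\txi,\nu,\xi}(E_t(m))$ versus $E_t(\psi_{\txi,\nu,\xi}(m))$ is technically delicate but combinatorially tractable using Lemma~\ref{Lem:comm}.
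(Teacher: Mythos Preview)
Your overall architecture matches the paper's proof exactly: establish \eqref{eq:main} on fundamental modules, deduce $\Nn$-invariance from Lemma~\ref{Lem:FO}, push to all of $E_t(m)$ via Lemma~\ref{Lem:comm}, check bar-compatibility, and conclude by the uniqueness in Theorem~\ref{Thm:canon}.

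The one substantive divergence is in how you reach the key identity
\[
\Psi_\xi(\iota_k(e(\alpha_{a,b}))) = \chi_{q,t}(L(Y_{\phi_\xi^{-1}(\alpha_{a,b},k)})).
\]
The paper does not argue by induction on $b-a$.  Instead it observes that, for the monotone quiver $Q_\xi$ attached to an increasing $\xi$, the element $e(\alpha)$ of \eqref{eq:ealpha} coincides with the rescaled dual PBW generator $\tE^*_{Q_\xi}(\alpha)$ of \cite[\S6.1]{HL}, and then invokes the general fact (built into the construction of $\Psi_\xi$ and \cite[Theorem 6.1]{HL}) that $\Psi_\xi$ sends $\iota_k(\tE^*_{Q_\xi}(\alpha))$ to $\chi_{q,t}(L(Y_{\phi_\xi^{-1}(\alpha,k)}))$ for \emph{every} $\alpha \in R_n^+$.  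This is a one-line appeal to existing literature.  Your inductive route via Lemma~\ref{Lem:tcomm} is not incorrect in spirit, but the inductive step would require you to prove, inside $K_t(\Cc_n)$, that a specific normalized $t$-commutator of two fundamental $(q,t)$-characters equals a third fundamental $(q,t)$-character; you correctly flag this as the ``main obstacle'' but give no mechanism for proving it.  That identity is precisely what the PBW theory of \cite{HL} packages, so your plan either recreates that theory or, more efficiently, should simply cite it as the paper does.

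The remaining points (bar-compatibility via the sign cancellation in $e(\alpha_{a,b})$, handling possible reordering of factors through Lemma~\ref{Lem:comm}) are treated more explicitly in your sketch than in the paper, which dispatches them with ``immediate from the construction''; your added detail there is fine and does no harm.
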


\begin{proof}
Recall that a height function $\xi \colon I_n \to \Z$ defines a Dynkin quiver $Q_\xi$ of type $\mathrm{A}_{n-1}$ so that we have an arrow $i \to j$ in $Q_\xi$ if and only if $|i-j| = 1$ and $\xi(i) > \xi(j)$.
For example, when $\xi$ is increasing, $Q_\xi$ is a monotone quiver: $Q_\xi = (1 \leftarrow 2 \leftarrow \cdots \leftarrow n-1)$.
Associated to a Dynkin quiver $Q$ of type $\mathrm{A}_{n-1}$, the rescaled dual PBW generator $\tE^*_Q(\alpha) \in U_n^+$ is defined for each positive root $\alpha \in R_n^+$ as in \cite[\S6.1]{HL}. 
(Note that $\tE^*_Q(\alpha)$ is written simply as $\tE^*(\alpha)$ in loc.\ cit.\ and our $t$ is equal to $v$ in loc.\ cit.)
For an arbitrary height function $\xi$, by the construction of the isomorphism $\Psi_\xi$ and \cite[Theorem 6.1]{HL}, we know that the isomorphism $\Psi_\xi \colon \hA_n \to K_t(\Cc_n)_{loc}$ sends $\iota_k(\tilde{E}^*_{Q_\xi}(\alpha))$ to $\chi_{q,t}(L(Y_{i,p}))$ with $(i,p) = \phi_\xi^{-1}(\alpha,k)$ for any $(\alpha,k) \in R_n^+ \times \Z$. 

Now, we assume that $\xi$ is increasing. 
Then, it is straightforward to check that the rescaled PBW generator $\tE_{Q_\xi}^*(\alpha)$ coincides with the element $e(\alpha)$ defined in \eqref{eq:ealpha} for any $\alpha \in R_n^+$.
Therefore, in this case, we have $\Psi_\xi(\iota_k(e(\alpha))) = \chi_{q,t}(L(Y_{i,p}))$ with $(i,p) = \phi_\xi^{-1}(\alpha,k)$ for any $(\alpha,k) \in R_n^+ \times \Z$. 
 
Thus, assuming that both $\xi$ and $\txi$ are increasing, we have
\begin{equation} \label{eq:main1}
\Psi_{\txi, \nu, \xi}(\chi_{q,t}(L(Y_{i,p}))) = \chi_{q,t}(L(\psi_{\txi,\nu,\xi}(Y_{i,p})))
\end{equation}
for any $(i,p) \in \hI_n$.  
In addition, since $(\alpha, \beta) = (\nu_*\alpha, \nu_*\beta)$ holds for any $\alpha, \beta \in R_n^+$, Lemma~\ref{Lem:FO} yields
\begin{equation} \label{eq:main2}
\Nn(Y_{i,p}, Y_{j,s}) = \Nn(\psi_{\txi,\nu,\xi}(Y_{i,p}), \psi_{\txi,\nu,\xi}(Y_{j,s}))
\end{equation}
for any $(i,p), (j,s) \in \hI_n$.
From \eqref{eq:main1} and \eqref{eq:main2}, together with Lemma~\ref{Lem:comm}, it follows that
\[ \Psi_{\txi, \nu, \xi}(E_t(m)) = E_t(\psi_{\txi, \nu, \xi}(m))\]
for all $m \in \cM_n^+$ (recall the definition of $E_t(m)$ in \eqref{eq:E}).
Moreover, it is immediate to see from the construction that $\Psi_{\txi, \nu, \xi}$ commutes with the bar-involutions. 
Therefore, for any $m \in \cM_n^+$, we find that the element $\Psi_{\txi, \nu, \xi}(\chi_{q,t}(L(m)))$ satisfies the two characterizing properties of $\chi_{q,t}(L(\psi_{\txi,\nu,\xi}(m)))$ in Theorem~\ref{Thm:canon}.
Thus, we obtain the desired equality \eqref{eq:main}.
\end{proof}

\begin{Rem}
For general height functions $\xi$ and $\txi$, the injective map $\psi_{\txi, \nu, \xi} \colon \cM_n^+ \to \cM_\tn^+$ is not a monoid homomorphism. 
It is only piecewise linear.  
See Remark~\ref{Rem:psi}.
\end{Rem}

By Theorem~\ref{Thm:main}, it makes sense to consider the specialization $\Psi_{\txi,\nu,\xi}|_{t=1}$ at $t^{1/2}=1$ of the homomorphism $\Psi_{\txi,\nu,\xi}$.
It is a unique ring homomorphism from $K(\Cc_n)$ to $K(\Cc_\tn)$ making the following diagram commute:
\begin{equation} \label{eq:t=1}
\vcenter{\xymatrix{
K_t(\Cc_n) \ar[r]^-{\Psi_{\txi,\nu,\xi}} \ar[d]_-{\ev_{t=1}} & K_t(\Cc_\tn) \ar[d]^-{\ev_{t=1}} \\ 
K(\Cc_n) \ar[r]^-{\Psi_{\txi, \nu, \xi}|_{t=1}} & K(\Cc_\tn).
}}
\end{equation} 

\begin{Cor} \label{Cor:t=1}
For any choice of $(\xi, \txi, \nu)$, the homomorphism $\Psi_{\txi, \nu, \xi}|_{t=1}$ induces an injective map from the set of simple classes of $\Cc_n$ to that of $\Cc_\tn$. 
More precisely, we have
\[ \Psi_{\txi, \nu, \xi}|_{t=1}([L(m)]) = [L(\psi_{\txi,\nu,\xi}(m))] \]
for any $m \in \cM_n^+$, where $\psi_{\txi,\nu,\xi} \colon \cM_n^+ \to \cM_\tn^+$ is as in Theorem~\ref{Thm:main}. 
\end{Cor}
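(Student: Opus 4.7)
The plan is to deduce the corollary as a direct specialization of the main theorem. By Theorem~\ref{Thm:main}, the map $\Psi_{\txi,\nu,\xi}$ restricts to an injective $\Z[t^{\pm1/2}]$-algebra homomorphism $K_t(\Cc_n) \hookrightarrow K_t(\Cc_\tn)$, so it makes sense to specialize it at $t^{1/2} = 1$, which is precisely how $\Psi_{\txi,\nu,\xi}|_{t=1}$ is defined via the commutative diagram \eqref{eq:t=1}.

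First, I would fix $m \in \cM_n^+$ and start from the identity
\[
\Psi_{\txi,\nu,\xi}(\chi_{q,t}(L(m))) = \chi_{q,t}(L(\psi_{\txi,\nu,\xi}(m)))
\]
provided by Theorem~\ref{Thm:main}. Applying $\ev_{t=1}$ to both sides and using the commutative diagram \eqref{eq:t=1}, the left-hand side becomes $\Psi_{\txi,\nu,\xi}|_{t=1}(\ev_{t=1}\chi_{q,t}(L(m)))$.

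Next, I would invoke Nakajima's theorem (Theorem~\ref{Thm:Nak}) twice, once for $L(m)$ in $\Cc_n$ and once for $L(\psi_{\txi,\nu,\xi}(m))$ in $\Cc_\tn$, to rewrite both sides as $q$-characters: $\ev_{t=1}\chi_{q,t}(L(m)) = \chi_q(L(m))$ and similarly for the image. Under the identification of $K(\Cc_n)$ with its image under the injective map $\chi_q$ (and likewise for $\Cc_\tn$), $\chi_q(L(m))$ corresponds to the simple class $[L(m)]$, and the desired equality $\Psi_{\txi,\nu,\xi}|_{t=1}([L(m)]) = [L(\psi_{\txi,\nu,\xi}(m))]$ follows. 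Injectivity on simple classes is then immediate from the injectivity of the map $\psi_{\txi,\nu,\xi} \colon \cM_n^+ \to \cM_\tn^+$ established in Theorem~\ref{Thm:main}, together with the fact that simple classes are indexed by dominant monomials.

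There is no genuine obstacle here; the corollary is a formal consequence of combining Theorem~\ref{Thm:main} with Nakajima's specialization result (Theorem~\ref{Thm:Nak}). The only point deserving a brief comment is the well-definedness of the specialization $\Psi_{\txi,\nu,\xi}|_{t=1}$, which is guaranteed because Theorem~\ref{Thm:main} shows that $\Psi_{\txi,\nu,\xi}$ preserves the $\Z[t^{\pm 1/2}]$-integral structure (as it sends canonical basis elements to canonical basis elements), making the evaluation $t^{1/2} \mapsto 1$ meaningful.
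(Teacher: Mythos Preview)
Your proposal is correct and follows essentially the same approach as the paper: the paper's proof simply states that the assertion follows from the commutativity of diagram~\eqref{eq:t=1} together with Theorem~\ref{Thm:Nak} and Theorem~\ref{Thm:main}, which is exactly the chain of reasoning you spell out in detail.
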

\begin{proof}
The assertion follows from the commutativity of the diagram \eqref{eq:t=1} together with Theorem~\ref{Thm:Nak} and Theorem~\ref{Thm:main}.
\end{proof}

\subsection{A proof of Conjecture~\ref{Conj:BC}}
\label{Ssec:compare}

Let $n, \tn \in \Z_{>1}$ satisfying $\tn/n \in \Z$ as in \S\ref{Ssec:BC}.
We define the function $\nu_{\tn/n} \colon [1,n] \to [1,\tn]$ by $\nu_{\tn/n}(i) \seq \tn i /n$.
We also take the increasing height functions $\xi_0 \colon I_n \to \Z$ and $\txi_0 \colon I_\tn \to \Z$ given by $\xi_0(i) \seq i$ and $\txi_0(i) \seq i$ respectively.

Recall Brito--Chari's inflation $\Psi_{\tn, n} \colon K(\Cc_n) \to K(\Cc_\tn)$ and the monoid homomorphism $\psi_{\tn,n} \colon \cM_n^+ \to \cM_\tn^+$ from \S\ref{Ssec:BC}. 
The next proposition, together with Corollary \ref{Cor:t=1}, verifies Conjecture~\ref{Conj:BC}.
\begin{Prop}
We have $\Psi_{\txi_0, \nu_{\tn/n},\xi_0}|_{t=1} = \Psi_{\tn,n}$ and $\psi_{\txi_{0}, \nu_{\tn/n}, \xi_0} = \psi_{\tn,n}$.
\end{Prop}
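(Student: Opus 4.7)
The plan is to reduce everything to an explicit check on fundamental modules, using the computation of $\phi_\xi$ in the monotone case from Example~\ref{Ex:phi}. Since $\xi_0$ and $\txi_0$ are both increasing, Proposition~\ref{Prop:main} tells us that $\psi_{\txi_0,\nu_{\tn/n},\xi_0}$ is a monoid homomorphism of $\cM_n^+ \to \cM_\tn^+$, so to identify it with $\psi_{\tn,n}$ it suffices to verify equality on each generator $Y_{i,p}$, i.e.\ that
\[ (\phi_{\txi_0}^{-1} \circ (\nu_{\tn/n,*} \times \id_\Z) \circ \phi_{\xi_0})(i,p) = (\tn i/n, \tn p/n) \]
for every $(i,p) \in \hI_n$. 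Once this is done, the first identity follows automatically: by Corollary~\ref{Cor:t=1}, $\Psi_{\txi_0,\nu_{\tn/n},\xi_0}|_{t=1}$ sends $[L(Y_{i,p})]$ to $[L(\psi_{\txi_0,\nu_{\tn/n},\xi_0}(Y_{i,p}))] = [L(Y_{\tn i/n,\tn p/n})]$, which coincides with the defining value of Brito--Chari's $\Psi_{\tn,n}$ on $[L(Y_{i,p})]$; since the fundamental classes generate $K(\Cc_n)$ as a ring, the two ring homomorphisms agree.

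For the bijection identity, I would set $d \seq \tn/n$, write any $p \equiv_2 i$ as $p = \xi_0(i) + 2p' = i + 2p'$ with $p' \in \Z$, and apply Euclidean division $p' = k_n(p')n + r_n(p')$ as in \eqref{eq:euclid}. The key observation is the compatibility of the Euclidean data with multiplication by $d$: from $dp' = k_n(p')\tn + d\, r_n(p')$ and $d\, r_n(p') \in [d,\tn] \subset [1,\tn]$, we get $k_\tn(dp') = k_n(p')$ and $r_\tn(dp') = d\, r_n(p')$, and similarly for $p'+i$. Plugging these into the formulas of Example~\ref{Ex:phi} and the inverse formula \eqref{eq:phi-1}, one then handles the two cases separately.

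In the first case $r_n(p') < r_n(p'+i)$, which is equivalent to $r_n(p') + i \le n$, one has $r_n(p'+i) - r_n(p') = i$, so $\nu_{\tn/n,*}$ sends $(\alpha_{r_n(p'), r_n(p'+i)}, 2k_n(p')+1)$ to $(\alpha_{d\, r_n(p'), d\, r_n(p'+i)}, 2k_n(p')+1)$, and the first branch of \eqref{eq:phi-1} yields an element of $\hI_\tn$ whose first coordinate is $d(r_n(p'+i) - r_n(p')) = di$ and whose second coordinate is $di + 2(k_n(p')\tn + d\, r_n(p')) = dp$. In the second case, one has $r_n(p'+i) = r_n(p') + i - n$, and applying the second branch of \eqref{eq:phi-1} produces first coordinate $\tn + d(r_n(p'+i) - r_n(p')) = \tn + d(i-n) = di$ and second coordinate $di + 2(k_n(p')\tn + d\, r_n(p')) = dp$, as required.

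The only real obstacle is the book-keeping in this case analysis, which is an essentially combinatorial matter of tracking how division with remainder interacts with multiplication by $d$; no new conceptual ingredient beyond Proposition~\ref{Prop:main}, Corollary~\ref{Cor:t=1} and the explicit formulas of Example~\ref{Ex:phi} is needed.
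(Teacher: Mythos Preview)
Your proposal is correct and follows essentially the same approach as the paper: the paper's proof simply states that ``a direct computation using the formula \eqref{eq:phi-1} in Example~\ref{Ex:phi}'' shows $\psi_{\txi_0,\nu_{\tn/n},\xi_0}(Y_{i,p}) = Y_{\tn i/n,\tn p/n}$ and then invokes Corollary~\ref{Cor:t=1}, which is exactly what you do, except that you helpfully spell out the case analysis that the paper leaves implicit.
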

\begin{proof}
A direct computation using the formula \eqref{eq:phi-1} in Example~\ref{Ex:phi} shows that the homomorphism $\psi_{\txi_0,\nu_{\tn/n}, \xi_0}$ sends $Y_{i,p}$ to $Y_{\tn i/n, \tn p/n}$ for all $(i,p) \in \hI_n$. 
Hence, we get the equality  $\psi_{\txi_0,\nu_{\tn/n}, \xi_0} = \psi_{\tn,n}$.
Then, by Corollary~\ref{Cor:t=1}, we have $\Psi_{\txi_0, \nu_{\tn/n}, \xi_0}|_{t=1}([L(Y_{i,p})]) = [L(Y_{\tn i/n, \tn p /n})]$ for any $(i,p) \in \hI_n$, which proves the equality $\Psi_{\txi_0, \nu_{\tn/n}, \xi_0}|_{t=1} = \Psi_{\tn,n}$. 
\end{proof}

\section{Categorification} \label{Sec:caton}
We end this paper with a brief discussion on a categorification of our quantum inflations using the quiver Hecke algebras of type $\mathrm{A}_\infty$.
It is built on the works of Kang--Kashiwara--Kim~\cite{KKK}, Kashiwara--Park~\cite{KP}, and Kashiwara--Kim--Oh--Park~\cite{KKOPbr}.

\subsection{Quiver Hecke algebras of type $\mathrm{A}_\infty$}
Let $P \seq \bigoplus_{a \in \Z}\Z \ep_{a}$ be a free abelian group of countably infinite rank equipped with the standard bilinear pairing $(\ep_a, \ep_b) = \delta_{a,b}$.
For each $a,b \in \Z$ with $a < b$, we set $\alpha_{a,b} \seq \ep_a - \ep_b$ and $\alpha_a \seq \alpha_{a,a+1}$.
The set $R^+ \seq \{ \alpha_{a,b} \mid a,b \in \Z, a<b\}$ is identical to the set of positive roots of type $\mathrm{A}_\infty$. 
Let $Q^+ \seq \sum_{a \in \Z} \Z_{\ge 0} \alpha_a \subset P$.

For each $\beta \in Q^+$, the quiver Hecke algebra $R(\beta)$ of type $\mathrm{A}_\infty$ is defined.
This is an associative $\Z$-graded algebra over $\C$ given by an explicit presentation. 
See \cite{KKK}.
Let $\Aa_\beta$ denote the category of finite-dimensional $\Z$-graded $R(\beta)$-modules and set $\Aa \seq \bigoplus_{\beta \in Q^+} \Aa_\beta$.
For $M \in \Aa_\beta$ and $M' \in \Aa_\beta'$, we can form their convolution product $M \circ M' \in \Aa_{\beta + \beta'}$. 
This gives rise to a bi-exact functor $\Aa_\beta \times \Aa_{\beta'} \to \Aa_{\beta + \beta'}$, which makes 
the category $\Aa$ into a $\Z$-graded monoidal abelian category.
In particular, the Grothendieck ring $K(\Aa) \seq \bigoplus_{\beta \in Q^+}K(\Aa_\beta)$
is a $\Z[t^{\pm 1}]$-algebra, where $t^{\pm 1}$ are the grading shifts. By convention, our $t$ is the same as $q^{-1}$ in \cite{KKK}. 
Taking the graded dual of the modules gives an anti-involution $M \mapsto M^*$ of $\Aa$.
It satisfies 
\begin{equation} \label{eq:dual}
(t^{\pm 1}M)^* \simeq t^{\mp 1}M^* \quad \text{and} \quad
(M \circ M')^* \simeq t^{-(\beta, \beta')}M'^* \circ M^*
\end{equation}
for $M \in \Aa_\beta$ and $M' \in \Aa_{\beta'}$.  
The Grothendieck ring $K(\Aa)$ has a free $\Z[t^{\pm 1}]$-basis formed by the classes of self-dual simple modules.    
Any simple module in $\Aa$ is isomorphic to a grading shift of a self-dual one.

For each positive root $\alpha \in R^+$, there is a self-dual one-dimensional module $L(\alpha) \in \Aa(\alpha)$.
In \cite{KKK}, $L(\alpha_{a,b})$ was denoted by $L(a,b-1)$.
The set $\{ [L(\alpha)] \mid \alpha \in R^+\}$ generates the $\Z[t^{\pm 1/2}]$-algebra $K(\Aa)$ and satisfies the relation
\begin{equation} \label{eq:La}
(1-t^{-2})[L(\alpha_{a,b})] = [L(\alpha_{a,b}) \circ L(\alpha_{b,c})] - t^{-1} [L(\alpha_{b,c}) \circ L(\alpha_{a,b})]
\end{equation}   
for any $a,b,c \in \Z$ with $a < b <c$.
See \cite[Proposition 4.3(vi)]{KKK}. 

\subsection{Categorification of $K_t(\Cc_n)$}
Given $n \in \Z_{>1}$, let $S_{n}$ de the automorphism of $P$ given by $S_n(\ep_a) = \ep_{a+n}$ for $a \in \Z$. 
Define a bilinear form $B_n(\cdot,\cdot) \colon P \times P \to \Z$ by
\[ B_n(x,y) \seq \sum_{k \in \Z_{>0}} (S_n^k(x),y). \] 
Then, twisting the convolution product $\circ$ in $\Aa$, we set 
\begin{equation} \label{eq:star}
M \star M' \seq t^{B_n(\beta, \beta')} M \circ M'
\end{equation}
for $M \in \Aa_\beta$ and $M' \in \Aa_{\beta'}$. 
This defines another monoidal structure on $\Aa$, and we write $\Aa_n \seq (\Aa, \star)$ for the $\Z$-graded monoidal abelian  category thus obtained.
From \eqref{eq:dual} and \eqref{eq:star}, we have
\begin{equation} \label{eq:star*}
(M \star M')^* \simeq t^{-(r_n(\beta), r_n(\beta'))} M'^* \star M^*,
\end{equation}
for $M \in \Aa_\beta$ and $M' \in \Aa_{\beta'}$, where $r_n \colon P \to P_n = \bigoplus_{i \in [1,n]}\Z \ep_i$ is the homomorphism given by $r_n(\ep_a) \seq \ep_{r_n(a)}$ in the notation of \eqref{eq:euclid}.

In \cite[\S4]{KKK}, a certain $\Z$-graded rigid monoidal abelian category $\Tt_n$ was constructed as a localization of the category $\Aa_n$.
By definition, it comes with an exact monoidal functor $\Omega_n \colon \Aa_n \to \Tt_n$ satisfying the following properties and it is universal among such monoidal functors: 
\begin{enumerate}
\item $\Omega_n(L(\alpha_{a,b})) = 0$ if $b-a > n$,
\item for any $a \in \Z$, there is an isomorphism between $\Omega_n(L(\alpha_{a,a+n}))$ and the unit object of $\Tt_n$ satisfying a certain compatibility condition (see \cite[Proposition A.12]{KKK} for detail).  
\end{enumerate}
Moreover, $\Omega_n$ sends simple modules in $\Aa_n$ to simple objects or zeros of $\Tt_n$, and every simple object of $\Tt_n$ is the image of a simple module in $\Aa_n$ under $\Omega_n$.
See \cite[Proposition A.11]{KKK}.  
We put $L_n(\alpha_{a,b}) \seq \Omega_n(L(\alpha_{a,b})) \neq 0$ for $a, b \in \Z$ with $b-a \in I_n = [1,n-1]$.
The monoidal category $\Tt_n$ gives a categorification of the quantum Grothendieck ring $K_t(\Cc_n)$ as follows.

\begin{Thm}[{Kang--Kashiwara--Kim~\cite{KKK}}] \label{Thm:SW}
There is an isomorphism 
\[ 
\Phi_n \colon K(\Tt_n)\otimes_{\Z[t^{\pm 1}]} \Z[t^{\pm 1/2}] \xrightarrow{\sim} K_t(\Cc_n)\]
of $\Z[t^{\pm 1/2}]$-algebras satisfying 
\begin{equation} \label{eq:Phi}
\Phi_n(t^{1/2}[L_n(\alpha_{a,b})]) = \chi_{q,t}(L(Y_{b-a, a+b}))\end{equation}
for any $a,b \in \Z$ with $b-a \in I_n$.
Moreover, there exists an exact monoidal functor  
$F_n \colon \Tt_n \to \Cc_n$
which sends simple objects of $\Tt_n$ to simple objects of $\Cc_n$ and satisfies $\ev_{t=1} \circ \Phi_n = [F_n]$.
\end{Thm}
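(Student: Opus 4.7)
The plan is to prove the theorem in two stages: first construct the functor $F_n \colon \Tt_n \to \Cc_n$ by quantum affine Schur--Weyl duality, and then deduce the isomorphism $\Phi_n$ by combining the Grothendieck-ring map $[F_n]$ with the Kazhdan--Lusztig-type characterization of the canonical basis from Theorem~\ref{Thm:canon}.

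For the first stage, I would apply the Kang--Kashiwara generalized quantum affine Schur--Weyl duality to the family $\{L(Y_{1,2a+1})\}_{a \in \Z}$ of fundamental $U_q(L\fsl_n)$-modules. The key input is that the denominators of the normalized R-matrices between these fundamentals exhibit simple zeros whose pattern matches the adjacency of the type-$\mathrm{A}_\infty$ Dynkin diagram; this produces a compatible system of KLR actions on the convolutions $L(Y_{1,2a_1+1}) \otimes \cdots \otimes L(Y_{1,2a_r+1})$ and hence a graded exact monoidal functor $\tilde F_n \colon \Aa_n \to \Cc_n$, where the twist \eqref{eq:star} is precisely the book-keeping device absorbing the relative degree shift so that $\tilde F_n$ is strictly monoidal for the star product. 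A direct cyclic-head computation inside a convolution then shows $\tilde F_n(L(\alpha_{a,b})) \simeq L(Y_{b-a, a+b})$ for $b-a \in I_n$, while a quantum-determinant argument identifies $\tilde F_n(L(\alpha_{a,a+n}))$ with the unit of $\Cc_n$ together with associativity data matching the compatibility singled out in the definition of $\Tt_n$. The universal property of $\Omega_n$ then yields $F_n \colon \Tt_n \to \Cc_n$; simplicity preservation follows from the standard KLR fact that a simple convolution head has simple (or zero) image, combined with the bijection between simples of $\Tt_n$ and non-vanishing images of simples of $\Aa_n$.

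For the second stage, I would read off from \eqref{eq:dual}--\eqref{eq:star*} that the classes $\{[L_n(\alpha_{a,b})]\}$ satisfy Serre-type quadratic relations (coming from \eqref{eq:La}) and commutation relations controlled by $B_n$. Matching these against the quantum-torus relations of the fundamental $(q,t)$-characters $\chi_{q,t}(L(Y_{i,p}))$ inside $\cY_{n,t}$ via Lemma~\ref{Lem:comm} and the explicit formula for $\Nn_{i,j}(k)$ in Example~\ref{Ex:phi}, one checks that the assignment \eqref{eq:Phi} extends to a well-defined $\Z[t^{\pm 1/2}]$-algebra homomorphism $\Phi_n$. A PBW-type basis argument --- convolution products of the $L_n(\alpha)$'s over any convex order on $R^+$ give a $\Z[t^{\pm 1}]$-basis of $K(\Tt_n)$, which $\Phi_n$ sends, up to suitable $t^{1/2}$-powers, to the standard basis $\{E_t(m)\}$ of \eqref{eq:E} --- then shows that $\Phi_n$ is an isomorphism after tensoring with $\Z[t^{\pm 1/2}]$.

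Finally, for each dominant monomial $m$, the class of the corresponding self-dual simple in $K(\Tt_n)$ is bar-invariant (the bar-involution on the KLR side being induced by the graded duality $M \mapsto M^*$, with compatibility to the bar on $\cY_{n,t}$ following from \eqref{eq:star*}) and expands unitriangularly in the standard basis with off-diagonal entries in $t^{-1}\Z[t^{-1}]$ thanks to positivity of KLR composition multiplicities. These are exactly the characterizing properties of $\chi_{q,t}(L(m))$ in Theorem~\ref{Thm:canon}, which forces the precise formula \eqref{eq:Phi} and, more generally, matches simple classes in $K(\Tt_n)$ with canonical basis elements in $K_t(\Cc_n)$. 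The last assertion $\ev_{t=1}\circ \Phi_n = [F_n]$ then reduces on canonical-basis generators to Nakajima's Theorem~\ref{Thm:Nak}. \emph{The main obstacle} is the first stage --- verifying that $\tilde F_n(L(\alpha_{a,a+n}))$ is a genuine tensor unit compatible with the universal property defining $\Tt_n$, which requires careful analysis of the R-matrix denominators at the critical value $b-a = n$ and an explicit identification of the corresponding quantum-determinant vector.
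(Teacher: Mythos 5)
Your overall route coincides with that of the source on which the paper itself relies: the paper proves Theorem~\ref{Thm:SW} only by citing \cite[Theorem 4.32]{KKK}, and your first stage (Schur--Weyl duality applied to the family $\{L(Y_{1,2a+1})\}_{a\in\Z}$, identification of the images of the segment modules, trivialization of the length-$n$ segments via the quantum determinant, and the universal property of $\Omega_n$) is precisely the construction carried out there. Two caveats on that stage: the functor into $\Cc_n$ cannot be \emph{graded} monoidal, since $\Cc_n$ carries no grading --- the grading survives only at the level of $K$-groups, which is where the twist \eqref{eq:star} actually matters; and the preservation of simple objects by $F_n$ is one of the substantive theorems of \cite{KKK}, not a formal consequence of ``a simple convolution head has simple image'', because $F_n$ of a simple object of $\Tt_n$ is not handed to you as the head of a convolution of known simples. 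As written, that step is asserted rather than proved.

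The genuine gap is in your second stage, the construction of $\Phi_n$. You propose to define $\Phi_n$ on the generators $[L_n(\alpha_{a,b})]$ and to obtain well-definedness by ``matching relations'', but no presentation of $K(\Tt_n)\otimes_{\Z[t^{\pm 1}]}\Z[t^{\pm 1/2}]$ in these generators is available (the only presentation in the paper, Theorem~\ref{Thm:Psi}, concerns $K_t(\Cc_n)_{loc}$, is over $\Q(t^{1/2})$, and points in the opposite direction), and the relations you propose to match cannot form a complete set in any case: the fundamental $(q,t)$-characters $t$-commute only in the restricted range of Lemma~\ref{Lem:comm}, and \eqref{eq:La} involves the non-invertible scalar $1-t^{-2}$. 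Hence ``one checks that the assignment extends'' does not follow as written. The logic has to be reversed: either construct the map out of the presented algebra $\hA_n\simeq K_t(\Cc_n)_{loc}$ and then descend to the $\Z[t^{\pm 1/2}]$-forms, or compare the two rings through bases --- ordered products of the $[L_n(\alpha)]$ giving standard classes that correspond to the $E_t(m)$ of \eqref{eq:E}, with bar-invariance via \eqref{eq:star*} and the unitriangularity of \cite[Proposition 16]{LNT} pinning down the simple classes as in Theorem~\ref{Thm:canon}; this is in effect what the paper's subsequent corollary does. Your own bar-invariance/triangularity argument appears only after you have assumed $\Phi_n$ exists, so it cannot be used to establish the homomorphism itself. (Minor: the identity $\ev_{t=1}\circ\Phi_n=[F_n]$ needs only $\chi_{q,t}(L(Y_{i,p}))=\chi_q(L(Y_{i,p}))$ on the generators, not Theorem~\ref{Thm:Nak}.)
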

\begin{proof}
This follows from \cite[Theorem 4.32]{KKK} and its proof.
The functor $F_n$ arises from the quantum affine Schur--Weyl duality. 
\end{proof}

Although the following corollary may be well-known for the experts, we shall give a detailed proof as the author could not find a proper reference. 

\begin{Cor}
The isomorphism $\Phi_n$ in Theorem~\ref{Thm:SW} sends each simple class in $K(\Tt_n)$ to a $t^{\frac{1}{2}\Z}$-multiple of a simple $(q,t)$-character in $K_t(\Cc_n)$.
\end{Cor}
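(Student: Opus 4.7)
My plan is to combine (i) the Schur--Weyl functor $F_n$ to identify the target simple class with (ii) the uniqueness characterization of the canonical basis in Theorem~\ref{Thm:canon}, applied after matching a standard basis of $K(\Tt_n)$ across $\Phi_n$. First, by twisting with a grading shift I may assume that $S \in \Tt_n$ is self-dual ($S^* \simeq S$); any other representative of its isomorphism class contributes an overall $t^{k/2}$-factor allowed by the statement. Since $F_n$ sends simples to simples, $F_n(S) \simeq L(m)$ for a unique $m \in \cM_n^+$, and the identity $\ev_{t=1}\circ \Phi_n = [F_n]$ together with Nakajima's theorem (Theorem~\ref{Thm:Nak}) yields
\[ \ev_{t=1}\bigl(\Phi_n([S])\bigr) = [L(m)] = \ev_{t=1}\bigl(\chi_{q,t}(L(m))\bigr). \]
This singles out $\chi_{q,t}(L(m))$ as the natural candidate for $\Phi_n([S])$.

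To lift the agreement of classical limits to an equality up to a $t^{k/2}$-factor, I would first check that $\Phi_n$ takes a standard basis of $K(\Tt_n)$ to a rescaling of the standard basis of $K_t(\Cc_n)$. Concretely, for $m' = Y_{i_1,p_1}\cdots Y_{i_d,p_d}$ with $p_1 \ge \cdots \ge p_d$ and $\beta_k$ the positive root paired with $Y_{i_k,p_k}$ via \eqref{eq:Phi}, a normalized class $\tilde{E}(m') \seq t^{c(m')}[L_n(\beta_1)\star \cdots \star L_n(\beta_d)] \in K(\Tt_n)\otimes\Z[t^{\pm 1/2}]$ is mapped by $\Phi_n$ to $E_t(m')$: a direct calculation from \eqref{eq:Phi}, \eqref{eq:E}, and the fact that $\Phi_n$ is an algebra isomorphism. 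On the $\Tt_n$-side, the PBW-type triangularity for the quiver Hecke algebras of type $\mathrm{A}_\infty$ underlying Theorem~\ref{Thm:SW} (via \cite{KKK, KP, KKOPbr}) expresses $[S]$ in the standard basis as $\tilde{E}(m) + \sum_{m' \ne m} u_{m'}(t)\tilde{E}(m')$ with $u_{m'}(t) \in t^{-1}\Z[t^{-1}]$, with $[S]$ bar-invariant under graded duality. Transporting these two properties through $\Phi_n$ and applying the uniqueness in Theorem~\ref{Thm:canon} yields $\Phi_n([S]) = \chi_{q,t}(L(m))$ up to the expected $t^{k/2}$-factor.

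The main obstacle will be the precise matching of the two bar involutions. The bar involution on $K(\Tt_n)$ is induced by the graded dual $M \mapsto M^*$, while the bar involution on $K_t(\Cc_n)$ fixes the subgroup $\cY_n$ and inverts $t^{1/2}$; because of the twist \eqref{eq:star*} in the monoidal structure of $\Tt_n$, these do not match under $\Phi_n$ on the nose, and the resulting mismatch is exactly what produces the $t^{k/2}$-ambiguity in the statement. Carefully tracking grading shifts so that both $\tilde{E}(m')$ and $[S]$ are bar-invariant, and so that the triangularity needed to invoke Theorem~\ref{Thm:canon} lives in $t^{-1}\Z[t^{-1}]$ with respect to the correct bar, is the bookkeeping-heavy core of the argument. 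Once this is in place, the rest of the argument is essentially formal.
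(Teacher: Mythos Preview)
Your approach is essentially the paper's: match a standard (PBW-type) basis across $\Phi_n$, verify bar-invariance after a rescaling that absorbs the twist \eqref{eq:star*}, and invoke the uniqueness in Theorem~\ref{Thm:canon}. Two remarks are in order. First, the detour through $F_n$ and Theorem~\ref{Thm:Nak} to identify the candidate $m$ is unnecessary, since the unitriangularity already pins down the leading term $E_t(m)$. Second, and more substantively, the triangularity you invoke---with coefficients lying in $t^{-1}\Z[t^{-1}]$---is not a formal consequence of anything in \cite{KKK, KP, KKOPbr}; it is exactly the statement that the self-dual simple classes in $K(\Aa)$ form the dual canonical basis, which is the theorem of Rouquier~\cite{Rouq} and Varagnolo--Vasserot~\cite{VV11} in characteristic zero, together with the explicit characterization from \cite{LNT}. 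The paper invokes these results explicitly and then carries out the bookkeeping you anticipate by working upstairs in $K(\Aa_n)$ before localizing, introducing the rescaling $[\widetilde{M}] \seq t^{(r_n(\beta),r_n(\beta))/4}[M]$ so that graded duality becomes the bar involution on the $K_t(\Cc_n)$ side; this is precisely the source of the $t^{\frac{1}{2}\Z}$-ambiguity in the statement.
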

\begin{proof}
As our base field $\C$ is of characteristic zero, the basis of the self-dual simple modules of $K(\Aa)$ correspond to the dual canonical basis of the quantum unipotent coordinate ring of type $\mathrm{A}_\infty$ by \cite{Rouq, VV11}.
In particular, the self-dual simple classes are parametrized by the set $(\Z_{\ge 0})^{\oplus R^+} $ (equivalent to the set of Zelevinsky's multi-segments) as follows. 
Define a total ordering $\le$ of the set $R^+$ so that we have $\alpha_{a,b} < \alpha_{a',b'}$ if $a < a'$, or $a = a'$ \& $b < b'$. 
For each $\bc = (c_{\alpha})_{\alpha \in R^+} \in (\Z_{\ge 0})^{\oplus R^+}$, we define 
\begin{equation} \label{eq:Mc}
M(\bc) = t^{-\sum_{\alpha \in R^+}c_{\alpha}(c_{\alpha}-1)/2} \mathop{\circ}_{\alpha \in R^+}^{\leftarrow} L(\alpha)^{\circ c_\alpha},
\end{equation}
where $\overset{\leftarrow}{\circ}$ means the ordered convolution product along the above ordering of $R^+$ increasing from the right to left. 
Then, the self-dual simple class $[L(\bc)]$ in $K(\Aa)$ corresponding to $\bc \in (\Z_{\ge 0})^{\oplus R^+}$ is characterized by the property 
\begin{equation} \label{eq:Lc} 
[L(\bc)] - [M(\bc)] \in \sum_{\wt(\bc') = \wt(\bc)} t^{-1} \Z[t^{-1}] [M(\bc')],
\end{equation}
where $\wt(\bc) \seq \sum_{\alpha \in R^+} c_\alpha \alpha \in Q^+$.
See \cite[Proposition 16]{LNT}.

Let $\sigma$ be the involution of $K(\Aa_n) \otimes_{\Z[t^{\pm 1}]} \Z[t^{\pm 1/2}]$ given by $\sigma(t^{1/2}) = t^{- 1/2}$ and $\sigma([M]) = t^{(r_n(\beta), r_n(\beta))/2}[M^*]$ for $M \in \Aa_\beta$. 
The equation \eqref{eq:star*} tells us that $\sigma$ is an anti-involution of $K(\Aa_n) \otimes_{\Z[t^{\pm 1}]} \Z[t^{\pm 1/2}]$. 
For $M \in \Aa_\beta$, we define its rescaled class $[\widetilde{M}] \seq t^{(r_n(\beta), r_n(\beta))/4}[M]$ so that $[\widetilde{M}]$ is fixed by $\sigma$ if and only if $[M] = [M^*]$.
We have 
\[[\widetilde{L(\alpha_{a,b})}] = t^{(1-\delta_{b-a,n})/2}[L(\alpha_{a,b})]\] 
for any $a, b \in \Z$ with $b-a \in [1,n]$.
Letting 
\[ \Phi_n' \seq \Phi_n \circ ([\Omega_n] \otimes 1) \colon K(\Aa_n) \otimes_{\Z[t^{\pm 1}]} \Z[t^{\pm 1/2}] \to K_t(\Cc_n),\] 
we have $\ol{(\cdot)} \circ \Phi'_n = \Phi'_n \circ \sigma$ by Theorem~\ref{Thm:SW}.
In particular, $\Phi'_n([\widetilde{L(\bc)}])$ is bar-invariant for any $\bc \in (\Z_{\ge 0})^{\oplus R^+}$.

Observe that, if $\alpha_{a,b} \ge \alpha_{a', b'}$ and $b-a, b'-a' \in [1,n]$, we have
\[B_n(\alpha_{a,b}, \alpha_{a',b'}) = -\delta(a = a' = b-n = b'-n),\]
and hence
\[ [\widetilde{M(\bc)}] = t^{\sum_{\alpha > \alpha'}c_{\alpha} c_{\alpha'}(r_n(\alpha), r_n(\alpha'))/2} \mathop{\star}^{\leftarrow}_{\alpha \in R^+}[\widetilde{L(\alpha)}]^{\star c_\alpha}\]
for any $\bc \in (\Z_{\ge 0})^{\oplus R^+}$ such that $c_{\alpha_{a,b}} = 0$ if $b-a > n$.
We also note that 
\[(r_n(\alpha_{a,b}), r_n(\alpha_{a',b'})) = -\Nn(Y_{b-a, a+b}, Y_{b'-a', a'+b'})\]
holds if $\alpha_{a,b} > \alpha_{a',b'}$ and $b-a, b'-a' \in I_n$ by Lemma~\ref{Lem:FO} and the computation in Example~\ref{Ex:phi}. 
Therefore, by Theorem~\ref{Thm:SW} and Lemma~\ref{Lem:comm}, we have
\[ \Phi'_n([\widetilde{M(\bc)}]) = \begin{cases}
0 & \text{if $c_{\alpha_{a,b}} \neq 0$ for some $a,b \in \Z$ with $b-a > n$}, \\
E_t(m_\bc) &\text{otherwise}
\end{cases}\]
for any $\bc \in (\Z_{\ge 0})^{\oplus R^+}$, where $m_\bc \seq \prod_{a,b \in \Z, b-a \in I_n}Y_{b-a, a+b}^{c_{\alpha_{a,b}}} \in \cM_n^+$ (recall the definition of $E_t(m)$ from \eqref{eq:E}).
Together with the equality \eqref{eq:Lc} and the bar-invariance of $\Phi'_n([\widetilde{L(\bc)}])$, we find that, if $\Phi'_n([\widetilde{L(\bc)}])$ is non-zero, it satisfies the two characterizing properties of $\chi_{q,t}(L(m_\bc))$ in Theorem~\ref{Thm:canon}, and hence $\Phi'_n([\widetilde{L(\bc)}]) = \chi_{q,t}(L(m_\bc))$. 
Since every simple object of $\Tt_n$ is the image under $\Omega_n$ of a simple module in $\Aa_n$, it verifies the assertion.
\end{proof}
\begin{Rem}
As far as the author understands, such a categorification of the quantum Grothendieck ring is known only for type $\mathrm{A}$ at this moment.  
One of the special features of type $\mathrm{A}$ is that the bozonic extension algebra $\hA_n$ can be obtained as an explicit quotient of the half of the quantized enveloping algebra of type $\mathrm{A}_{\infty}$, which enables us to construct the categorification out of the quiver Hecke algebras of type $\mathrm{A}_\infty$. 
It would be very interesting if we have a similar categorification for the other types.
\end{Rem}

\subsection{Categorification of quantum inflations}
Fix $n, \tn \in \Z_{>1}$ with $n < \tn$.
Let $\nu \colon [1,n] \to [1,\tn]$ be an increasing function as before. 
We extend $\nu$ to an increasing function $\nu \colon \Z \to \Z$ by 
\[ \nu(p) \seq k_n(p)\tn + \nu(r_n(p)) \]
for any $p \in \Z$, where we used the notation from \eqref{eq:euclid}.
Then, we have the induced homomorphism $\nu_* \colon P \to P$ given by $\nu_*(\ep_a) \seq \ep_{\nu(a)}$, which restricts to $R^+ \to R^+$ satisfying $\nu_*(\alpha_{a,b}) = \alpha_{\nu(a), \nu(b)}$ for any $a,b \in \Z$ with $a<b$.
Let us consider the collection $\{ L(\nu_*(\alpha_{a})) \mid a \in \Z\}$ of simple modules in $\Aa$, and apply the construction of Kashiwara--Park~\cite{KP} to the duality datum formed by the affinizations of these simple modules.
Note that the degree of the normalized $R$-matrix between $L(\nu_*(\alpha_{a}))$ and $L(\nu_*(\alpha_{b}))$ is equal to $-(\nu_*(\alpha_{a}), \nu_*(\alpha_{b})) = -(\alpha_a, \alpha_b)$ as computed in \cite[Proposition 4.3.(vii)]{KKK}.
The duality datum gives rise to a $\Z$-graded exact monoidal functor 
\[ \wh{F}_\nu \colon \Aa \to \Aa \]
satisfying $\wh{F}_\nu(L(\alpha_a)) \simeq L(\nu_*(\alpha_a))$ for any $a \in \Z$.
By \eqref{eq:La}, it follows that $\wh{F}_\nu(L(\alpha)) \simeq L(\nu_*(\alpha))$ for any $\alpha \in R^+$.
Since $B_{\tn}(\nu_*(x), \nu_*(y)) = B_n(x,y)$ for any $x,y \in P$, the functor $\wh{F}_\nu$ intertwines the twisted convolution products, and hence gives a $\Z$-graded exact monoidal functor 
\[\wh{F}_\nu \colon \Aa_n \to \Aa_\tn. \]
By the universality of the localizations (see the paragraph before Theorem \ref{Thm:SW}), there exists a unique $\Z$-graded exact monoidal functor
\[ F_\nu \colon \Tt_n \to \Tt_\tn\]
such that $F_\nu \circ \Omega_n \simeq \Omega_\tn \circ \wh{F}_\nu$. 
It satisfies $F_\nu(L_n(\alpha_{a,b})) \simeq L_\tn(\alpha_{\nu(a), \nu(b)})$ for any $a,b \in \Z$ with $b-a \in I_n$.

\begin{Prop} \label{Prop:Fnu}
Let $\xi_0 \colon I_n \to \Z$ and $\txi_0 \colon I_\tn \to \Z$ be the height functions given by $\xi_0(i) = i$ and $\txi_0(i)=i$ as in \S\ref{Ssec:compare}. 
For any increasing function $\nu \colon [1,n] \to [1,\tn]$, the following diagram commutes:
\begin{equation} \label{eq:Fnu}
\vcenter{
\xymatrix{
K(\Tt_n)\otimes_{\Z[t^{\pm 1}]} \Z[t^{\pm 1/2}] \ar[r]^-{\Phi_n}_-{\sim} \ar[d]_-{[F_\nu] \otimes 1}& K_t(\Cc_n) \ar[d]^-{\Psi_{\txi_0, \nu, \xi_0}} \\
K(\Tt_\tn)\otimes_{\Z[t^{\pm 1}]} \Z[t^{\pm 1/2}] \ar[r]^-{\Phi_\tn}_-{\sim}&  K_t(\Cc_\tn).
}}
\end{equation}
In particular, $F_\nu$ sends simple objects of $\Tt_n$ to simple objects of $\Tt_\tn$. 
\end{Prop}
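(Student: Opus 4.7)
The plan is to reduce the commutativity of~\eqref{eq:Fnu} to a verification on a small generating set, and then to a direct computation using the explicit formulas of Example~\ref{Ex:phi}. First, I would note that both compositions in~\eqref{eq:Fnu} are $\Z[t^{\pm 1/2}]$-algebra homomorphisms: $\Phi_n$ and $\Phi_\tn$ are isomorphisms by Theorem~\ref{Thm:SW}, $[F_\nu]$ is a ring homomorphism because $F_\nu$ is monoidal, and $\Psi_{\txi_0, \nu, \xi_0}$ restricts to a ring homomorphism on $K_t(\Cc_n)$ by Theorem~\ref{Thm:main}. The algebra $K_t(\Cc_n)$ is generated over $\Z[t^{\pm 1/2}]$ by its fundamental $(q,t)$-characters $\chi_{q,t}(L(Y_{i,p}))$, $(i,p) \in \hI_n$ (since each standard-basis element $E_t(m)$ is a $\Z[t^{\pm 1/2}]$-multiple of a $*$-product of such fundamentals), and via~\eqref{eq:Phi} each of these corresponds to a class $t^{1/2}[L_n(\alpha_{a,b})]$ with $a = (p-i)/2$, $b = (p+i)/2$. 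Thus it suffices to verify~\eqref{eq:Fnu} on the generators $t^{1/2}[L_n(\alpha_{a,b})]$ for $a,b \in \Z$ with $b-a \in I_n$.

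On such a generator, the ``down then right'' composition evaluates via $F_\nu(L_n(\alpha_{a,b})) \simeq L_\tn(\alpha_{\nu(a),\nu(b)})$ and~\eqref{eq:Phi} to $\chi_{q,t}(L(Y_{\nu(b)-\nu(a),\,\nu(a)+\nu(b)}))$, while the ``right then down'' composition evaluates via~\eqref{eq:Phi} and Theorem~\ref{Thm:main} to $\chi_{q,t}(L(\psi_{\txi_0,\nu,\xi_0}(Y_{b-a,\,a+b})))$. The proof therefore reduces to the monomial identity
\[
\psi_{\txi_0, \nu, \xi_0}(Y_{b-a,\,a+b}) = Y_{\nu(b)-\nu(a),\,\nu(a)+\nu(b)}.
\]
Writing $a = k_n(a)n + r_n(a)$ and applying Example~\ref{Ex:phi} to $\phi_{\xi_0}(b-a, a+b)$ yields either $(\alpha_{r_n(a), r_n(b)}, 2k_n(a)+1)$ or $(\alpha_{r_n(b), r_n(a)}, 2k_n(a)+2)$, according as $r_n(a) < r_n(b)$ (equivalently $k_n(a)=k_n(b)$) or $r_n(a) > r_n(b)$ (equivalently $k_n(b)=k_n(a)+1$). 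Applying $\nu_* \times \id$ and then $\phi_{\txi_0}^{-1}$ via~\eqref{eq:phi-1}, and simplifying with $\nu(p) = k_n(p)\tn + \nu(r_n(p))$, produces $(\nu(b)-\nu(a),\,\nu(a)+\nu(b))$ in both cases.

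For the in-particular assertion, the commutativity of~\eqref{eq:Fnu} identifies $[F_\nu] \otimes 1$ with $\Phi_\tn^{-1} \circ \Psi_{\txi_0, \nu, \xi_0} \circ \Phi_n$. By Theorem~\ref{Thm:main}, $\Psi_{\txi_0, \nu, \xi_0}$ sends each simple $(q,t)$-character to a simple $(q,t)$-character, and by the corollary preceding this proposition these are in bijection, up to half-integral powers of $t$, with the classes of self-dual simple objects in $\Tt$. Hence $[F_\nu([L])]$ is a grading shift of a single self-dual simple class in $K(\Tt_\tn)$ for every simple $L$ in $\Tt_n$; since the half-integer shift must in fact be integral (the class lies in $K(\Tt_\tn)$ itself) and a module whose class is a grading shift of a single simple class is automatically a grading shift of a simple object, $F_\nu$ sends simples to simples. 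The only real work in the whole proof is the case-by-case monomial calculation; conceptually everything else is formal once Theorem~\ref{Thm:main} is available, so the main obstacle is purely bookkeeping in matching the conventions for the extension of $\nu$ to $\Z$ with those of $\phi_{\xi_0}$ and $\phi_{\txi_0}$.
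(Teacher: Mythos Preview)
Your proof is correct and follows essentially the same approach as the paper's: reduce to generators via Theorem~\ref{Thm:SW}, then verify the monomial identity $\psi_{\txi_0,\nu,\xi_0}(Y_{b-a,a+b})=Y_{\nu(b)-\nu(a),\nu(a)+\nu(b)}$ using Proposition~\ref{Prop:main} and the explicit formulas in Example~\ref{Ex:phi}. You have simply spelled out the case-by-case computation and the ``in particular'' clause in more detail than the paper does.
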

\begin{proof}
In view of Theorem \ref{Thm:SW}, it is enough to prove that $\Psi_{\txi_0,\nu,\xi_0}$ sends $\chi_{q,t}(L(Y_{b-a, a+b}))$ to $\chi_{q,t}(L(Y_{\nu(b)-\nu(a), \nu(a)+\nu(b)}))$ for each $a,b \in \Z$ with $b-a \in I_n$.
This is easily checked by Proposition~\ref{Prop:main} and the explicit computation of $\phi_{\xi_0}$ and $\phi_{\txi_0}$ in Example~\ref{Ex:phi}.  
\end{proof}

\begin{Rem}
Specializing \eqref{eq:Fnu} at $t=1$, we get the commutative diagram:
 \[ 
\xymatrix{
K(\Tt_n)/(t-1)K(\Tt_n) \ar[r]^-{[F_n]}_-{\sim} \ar[d]_-{[F_\nu]}& K(\Cc_n) \ar[d]^-{\Psi_{\txi_0, \nu, \xi_0}|_{t=1}} \\
K(\Tt_\tn)/(t-1)K(\Tt_\tn) \ar[r]^-{[F_\tn]}_-{\sim}&  K(\Cc_\tn).
}
\]
The composition $F_\tn \circ F_{\nu} \colon \Tt_n \to \Cc_\tn$ can be constructed more directly following the recipe of \cite[\S6.1]{KKOPa} based on the generalized quantum affine Schur--Weyl duality in the sense of \cite{KKK}. In  \cite[Theorem 6.9]{KKOPa}, it is also proved  that the resulting functor always respects the simple classes. 
In this way, one may obtain an alternative proof of Conjecture~\ref{Conj:BC} without  using the quantum Grothendieck rings.  
\end{Rem}

Thus, the homomorphism $\Psi_{\txi_0, \nu, \xi_0}$ has a categorical lift $F_\nu$.
We also have a categorical lift of the homomorphism $\Psi_{\txi, \nu, \xi}$ for general height functions $\xi$ and $\txi$.
To see this, we recall the obvious factorization $\Psi_{\txi, \nu, \xi} = \Psi_{\txi, \txi_0} \circ \Psi_{\txi_0, \nu, \xi_0} \circ \Psi_{\xi_0, \xi}$.   
As we already mentioned in Remark \ref{Rem:FHOO2}, the automorphism $\Psi_{\xi_0, \xi}$ can be written as a composition of braid symmetries $\sigma_i$, $i \in I_n$, and a spectral parameter shift $\Sigma_c$ for some $c \in \Z$ mapping each $\chi_{q,t}(Y_{i,p})$ to $\chi_{q,t}(L(Y_{i,p+2c}))$.
By Kashiwara--Kim--Oh--Park~\cite[Theorem 3.3]{KKOPbr}, we know that the braid symmetry $\sigma_i$ has a categorical lift. 
As for the spectral parameter shift $\Sigma_c$, we have an obvious categorical lift, that is, the auto-equivalence of $\Tt_n$ transforming $L_n(\alpha_{a,b})$ to $L_n(\alpha_{a+c, b+c})$ for any $a,b \in \Z$ with $b-a \in I_n$.
Thus, by composition, the automorphism $\Psi_{\xi_0, \xi}$ has a categorical lift.
The same holds for $ \Psi_{\txi, \txi_0}$ as well.  
Thus, together with Proposition~\ref{Prop:Fnu}, we obtain the following conclusion. 
\begin{Thm} \label{Thm:categorification}
For any height functions $\xi \colon I_n \to \Z$, $\txi \colon I_\tn \to \Z$ and any increasing function $\nu \colon [1,n] \to [1,\tn]$, there exists a $\Z$-graded exact monoidal functor 
\[F_{\txi, \nu, \xi} \colon \Tt_n \to \Tt_\tn\] 
making the following diagram commute: 
\[ 
\xymatrix{
K(\Tt_n)\otimes_{\Z[t^{\pm 1}]} \Z[t^{\pm 1/2}] \ar[r]^-{\Phi_n}_-{\sim} \ar[d]_-{[F_{\txi, \nu, \txi}] \otimes 1}& K_t(\Cc_n) \ar[d]^-{\Psi_{\txi, \nu, \xi}} \\
K(\Tt_\tn)\otimes_{\Z[t^{\pm 1}]} \Z[t^{\pm 1/2}] \ar[r]^-{\Phi_\tn}_-{\sim}&  K_t(\Cc_\tn).
}
\]
In particular, $F_{\txi,\nu,\xi}$ sends simple objects of $\Tt_n$ to simple objects of $\Tt_\tn$. 
\end{Thm}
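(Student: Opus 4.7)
The plan is to exploit the factorization
\[ \Psi_{\txi, \nu, \xi} = \Psi_{\txi, \txi_0} \circ \Psi_{\txi_0, \nu, \xi_0} \circ \Psi_{\xi_0, \xi}, \]
stated in the paragraph preceding the theorem, and to categorify each of the three factors individually before composing them. The middle factor is already handled: Proposition~\ref{Prop:Fnu} supplies a graded exact monoidal functor $F_\nu \colon \Tt_n \to \Tt_\tn$ categorifying $\Psi_{\txi_0, \nu, \xi_0}$, so nothing more is needed there.

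For the two outer factors (which are automorphisms of $K_t(\Cc_n)_{loc}$ and $K_t(\Cc_\tn)_{loc}$ respectively), the plan is to invoke Remark~\ref{Rem:FHOO2}: each of $\Psi_{\xi_0, \xi}$ and $\Psi_{\txi, \txi_0}$ decomposes as a finite product of braid symmetries $\sigma_i^{\pm 1}$ composed with a spectral parameter shift $\Sigma_c$ for a suitable $c \in \Z$. The braid symmetries $\sigma_i$ admit categorical lifts to self-equivalences of $\Tt_n$ (resp.\ $\Tt_\tn$) by Theorem 3.3 of \cite{KKOPbr}, and the spectral shift $\Sigma_c$ has the manifest categorical lift sending $L_n(\alpha_{a,b})$ to $L_n(\alpha_{a+c, b+c})$, induced by the index-shift auto-equivalence of the ambient graded monoidal category $\Aa_n$.

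Given these ingredients, I would define $F_{\txi, \nu, \xi}$ as the composition of the categorical lifts in the order dictated by the factorization above, and check the commutativity of the diagram in the theorem by pasting three commutative squares: the middle one is Proposition~\ref{Prop:Fnu}, and the outer two record the compatibility of the categorical braid symmetries and the index-shift auto-equivalence with the isomorphisms $\Phi_n$ and $\Phi_\tn$ of Theorem~\ref{Thm:SW}. Exactness and gradedness of the composite functor are automatic since each factor has these properties.

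The main obstacle will be verifying that the categorical braid symmetry from \cite{KKOPbr} truly induces the algebraic braid symmetry $\sigma_i$ on $K_t(\Cc_n)_{loc}$ referenced in Remark~\ref{Rem:FHOO2}, after transport through $\Phi_n$; this identification is essentially what is proved in loc.\ cit., but the conventions relating the quiver Hecke side and the quantum torus side must be tracked carefully (notably the normalization $t = q^{-1}$ and the rescaling appearing in \eqref{eq:Phi}). Once this compatibility is in hand, the rest of the argument amounts to bookkeeping.
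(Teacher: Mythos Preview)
Your proposal is correct and follows essentially the same route as the paper: the paper's argument (contained in the paragraph immediately preceding the theorem) uses precisely the factorization $\Psi_{\txi, \nu, \xi} = \Psi_{\txi, \txi_0} \circ \Psi_{\txi_0, \nu, \xi_0} \circ \Psi_{\xi_0, \xi}$, categorifies the middle factor via Proposition~\ref{Prop:Fnu}, and lifts the outer factors by decomposing them (via Remark~\ref{Rem:FHOO2}) into braid symmetries $\sigma_i$ together with a spectral shift $\Sigma_c$, invoking \cite[Theorem 3.3]{KKOPbr} for the categorical lift of each $\sigma_i$ and the obvious index-shift auto-equivalence for $\Sigma_c$. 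The only difference is tonal: the paper simply cites \cite{KKOPbr} for the compatibility you flag as the ``main obstacle,'' without further comment.
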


\subsection*{Acknowledgments}
The author would like to thank the anonymous referees for their useful comments and suggestions.
This work was supported by JSPS KAKENHI Grant No.\ JP23K12955.   

\subsection*{Declarations}
The author has no relevant financial or non-financial interests to disclose.
We do not analyze or generate any datasets, because our work proceeds within a mathematical approach.


\begin{thebibliography}{10}

\bibitem{BC}
Matheus Brito and Vyjayanthi Chari.
\newblock Higher order {K}irillov-{R}eshetikhin modules for {$\bold
  U_q(A_{n}^{(1)})$}, imaginary modules and monoidal categorification.
\newblock {\it J. Reine Angew. Math.}, 804:221--262, 2023.

\bibitem{CP}
Vyjayanthi Chari and Andrew Pressley.
\newblock {\it A guide to quantum groups}.
\newblock Cambridge University Press, Cambridge, 1994.

\bibitem{CP95}
Vyjayanthi Chari and Andrew Pressley.
\newblock Quantum affine algebras and their representations.
\newblock In {\it Representations of groups ({B}anff, {AB}, 1994)}, volume~16
  of {\it CMS Conf. Proc.}, pages 59--78. Amer. Math. Soc., Providence, RI,
  1995.

\bibitem{FM}
Edward Frenkel and Evgeny Mukhin.
\newblock Combinatorics of {$q$}-characters of finite-dimensional
  representations of quantum affine algebras.
\newblock {\it Comm. Math. Phys.}, 216(1):23--57, 2001.

\bibitem{FR}
Edward Frenkel and Nicolai Reshetikhin.
\newblock The {$q$}-characters of representations of quantum affine algebras
  and deformations of {$\mathscr W$}-algebras.
\newblock In {\it Recent developments in quantum affine algebras and related
  topics ({R}aleigh, {NC}, 1998)}, volume 248 of {\it Contemp. Math.}, pages
  163--205. Amer. Math. Soc., Providence, RI, 1999.

\bibitem{FHOO}
Ryo Fujita, David Hernandez, Se-jin Oh, and Hironori Oya.
\newblock Isomorphisms among quantum {G}rothendieck rings and propagation of
  positivity.
\newblock {\it J. Reine Angew. Math.}, 785:117--185, 2022.

\bibitem{FHOO2}
Ryo Fujita, David Hernandez, Se-jin Oh, and Hironori Oya.
\newblock Isomorphisms among quantum {G}rothendieck rings and cluster algebras.
\newblock Preprint, \arxiv{2304.02562}v2, 2023.

\bibitem{Her}
David Hernandez.
\newblock Algebraic approach to {$q,t$}-characters.
\newblock {\it Adv. Math.}, 187(1):1--52, 2004.

\bibitem{HL10}
David Hernandez and Bernard Leclerc.
\newblock Cluster algebras and quantum affine algebras.
\newblock {\it Duke Math. J.}, 154(2):265--341, 2010.

\bibitem{HLad}
David Hernandez and Bernard Leclerc.
\newblock Monoidal categorifications of cluster algebras of type {$A$} and
  {$D$}.
\newblock In {\it Symmetries, integrable systems and representations},
  volume~40 of {\it Springer Proc. Math. Stat.}, pages 175--193. Springer,
  Heidelberg, 2013.

\bibitem{HL}
David Hernandez and Bernard Leclerc.
\newblock Quantum {G}rothendieck rings and derived {H}all algebras.
\newblock {\it J. Reine Angew. Math.}, 701:77--126, 2015.

\bibitem{HO}
David Hernandez and Hironori Oya.
\newblock Quantum {G}rothendieck ring isomorphisms, cluster algebras and
  {K}azhdan-{L}usztig algorithm.
\newblock {\it Adv. Math.}, 347:192--272, 2019.

\bibitem{KKK}
Seok-Jin Kang, Masaki Kashiwara, and Myungho Kim.
\newblock Symmetric quiver {H}ecke algebras and {R}-matrices of quantum affine
  algebras.
\newblock {\it Invent. Math.}, 211(2):591--685, 2018.

\bibitem{KKOPbr}
Masaki Kashiwara, Myungho Kim, Se-jin Oh, and Euiyong Park.
\newblock Braid group action on the module category of quantum affine algebras.
\newblock {\it Proc. Japan Acad. Ser. A Math. Sci.}, 97(3):13--18, 2021.

\bibitem{KKOPa}
Masaki Kashiwara, Myungho Kim, Se-jin Oh, and Euiyong Park.
\newblock Cluster algebra structures on module categories over quantum affine
  algebras.
\newblock {\it Proc. Lond. Math. Soc. (3)}, 124(3):301--372, 2022.

\bibitem{KKOPbr2}
Masaki Kashiwara, Myungho Kim, Se-jin Oh, and Euiyong Park.
\newblock Braid symmetries on bosonic extensions.
\newblock Preprint, \arxiv{2408.07312}v1, 2024.

\bibitem{KP}
Masaki Kashiwara and Euiyong Park.
\newblock Affinizations and {R}-matrices for quiver {H}ecke algebras.
\newblock {\it J. Eur. Math. Soc. (JEMS)}, 20(5):1161--1193, 2018.

\bibitem{LNT}
Bernard Leclerc, Maxim Nazarov, and Jean-Yves Thibon.
\newblock Induced representations of affine {H}ecke algebras and canonical
  bases of quantum groups.
\newblock In {\it Studies in memory of {I}ssai {S}chur ({C}hevaleret/{R}ehovot,
  2000)}, volume 210 of {\it Progr. Math.}, pages 115--153. Birkh\"auser
  Boston, Boston, MA, 2003.

\bibitem{Li}
Yiqiang Li.
\newblock Quantum groups and edge contractions.
\newblock Preprint, \arxiv{2308.16306}v1, 2023.

\bibitem{Nak}
Hiraku Nakajima.
\newblock Quiver varieties and {$t$}-analogs of {$q$}-characters of quantum
  affine algebras.
\newblock {\it Ann. of Math. (2)}, 160(3):1057--1097, 2004.

\bibitem{Rouq}
Rapha\"el Rouquier.
\newblock Quiver {H}ecke algebras and 2-{L}ie algebras.
\newblock {\it Algebra Colloq.}, 19(2):359--410, 2012.

\bibitem{VV03}
M.~Varagnolo and E.~Vasserot.
\newblock Perverse sheaves and quantum {G}rothendieck rings.
\newblock In {\it Studies in memory of {I}ssai {S}chur ({C}hevaleret/{R}ehovot,
  2000)}, volume 210 of {\it Progr. Math.}, pages 345--365. Birkh\"auser
  Boston, Boston, MA, 2003.

\bibitem{VV11}
M.~Varagnolo and E.~Vasserot.
\newblock Canonical bases and {KLR}-algebras.
\newblock {\it J. Reine Angew. Math.}, 659:67--100, 2011.

\end{thebibliography}

\end{document}